\providecommand{\tabularnewline}{\\}
\numberwithin{equation}{section}
\numberwithin{figure}{section}
\theoremstyle{plain}
\newtheorem{thm}{Theorem}
  \theoremstyle{plain}
  \newtheorem{prop}[thm]{Proposition}
  \theoremstyle{definition}
  \newtheorem{defn}[thm]{Definition}
  \theoremstyle{plain}
  \newtheorem{cor}[thm]{Corollary}
  \theoremstyle{plain}
  \newtheorem{lem}[thm]{Lemma}
  \theoremstyle{plain}
  \newtheorem{fact}[thm]{Fact}
\begin{document}

\title{Cohen-Macaulayness of Rees Algebras of Diagonal Ideals}

\author{Kuei-Nuan Lin }

\subjclass[2000]{\textcolor{black}{Primary 13C40, 14M12; Secondary 13P10, 14Q15, 05E40.}}

\keywords{\textcolor{black}{Rees Algebra, Secant Variety, Join Variety, Determinantal
Ring, Symmetric Algebra, Alexander Dual, Regularity.}}
\begin{abstract}
Given two determinantal rings over a field $k$. We consider the Rees
algebra of the diagonal ideal, the kernel of the multiplication map.
The special fiber ring of the diagonal ideal is the homogeneous coordinate
ring of the join variety. When the Rees algebra and the Symmetric
algebra coincide, we show that the Rees algebra is Cohen-Macaulay.
\end{abstract}
\maketitle

\section{Introduction}

\textcolor{black}{Determinantal rings and varieties have been a central
topic of commutative algebra and algebraic geometry. The embedded
join of two subschemes $X,$ $Y$ of $\mathcal{\mathbb{P}}_{k}^{n}$
is another important subject. When $X=Y$, the join construction yields
the classical secant variety. Join varieties are an important topic
in algebraic geometry. The embedded join of $X$ and $Y$ is the closure
of the union of all lines passing through two distinct points of $X$
and $Y$. An important question is whether the vaiety is all of $\mathcal{\mathbb{P}}_{k}^{n}$
. As the special fiber ring of the diagonal ideal is the homogeneous
coordinate ring of the embedded join, it is natural to investigate
the blowup along the diagonal, rather than just the special fiber
in the blow up.}

\textcolor{black}{To study join varieties of determinantal varieties,
we investigate blowups in products of determinantal varieties. It
turns out that for some of the cases where the embedded join is the
whole space \cite{S-U}, the Rees algebra and the symmetric algebra
of the diagonal ideal coincide \cite{L}. In this work, we show that
the Rees algebras are Cohen-Macaulay in those cases. This continues
work of Simis and Ulrich \cite{S-U}, and of the author \cite{L}.}

\textcolor{black}{We now describe the setting. Let $k$ be a field,
$2\leq m\leq n$ integers, $X=[x_{ij}]$ an $m\times n$ matrix of
variables over $k$, and $I=I_{u_{1}}(X)$, $J=I_{u_{2}}(X)$ the
ideals of $k[X]$ generated by the $u_{1}\times u_{1}$ minors of
$X$ and the $u_{2}\times u_{2}$ minors of $X$. Let $R_{1}=k[X]/I$,
$R_{2}=k[X]/J$ be two determinantal rings. We consider the diagonal
ideal $\mathbb{D}$ of $S=R_{1}\otimes_{k}R_{2}$, defined via the
exact sequence \[
0\longrightarrow\mathbb{D}\longrightarrow S\overset{_{\mathrm{mult.}}}{\longrightarrow}k[X]/(I+J)\longrightarrow0.\]
 The} ideal $\mathbb{D}$ is generated by the images of $x_{ij}\otimes1-1\otimes x_{ij}$
in $S$. The homogeneous coordinate ring of the embedded join variety
is the $k$-subalgebra of $S$ generated by the images of $x_{ij}\otimes1-1\otimes x_{ij}$.
Those elements are homogeneous of degree 1. The homogeneous coordinate
ring of the embedded joint variety $\mathcal{J}(I,J)\subseteq\mathbb{P}_{k}^{mn-1}$
of the determinantal varieties $V(I),\mbox{ }V(J)\mbox{ in }\mathbb{P}_{k}^{mn-1}$
can be identified with $\mathcal{R}(\mathbb{D})\otimes_{S}k=\mathcal{F}(\mathbb{D})$
regarding $k$ as $S/\mathfrak{m}$ where $\mathfrak{m}$ is the homogeneous
maximal ideal of $S$.

The scheme $\mbox{Proj}(\mathcal{F}(\mathbb{D}))$ is the special
fiber in the blowup $\mbox{Proj}(\mathcal{R}(\mathbb{D}))$ of $\mbox{Spec}(S)$
along $V(\mathbb{D})$. In this work, we study, more broadly, the
blowup, rather than the special fiber. 
\begin{thm}
\textcolor{black}{\label{CM} The Rees algebra $\mathcal{R}(\mathbb{D})$
is Cohen-Macaulay if $I$ and $J$ are generated by the maximal minors
of submatrices of $X$.}
\end{thm}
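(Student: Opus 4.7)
The plan is to reduce the Cohen-Macaulayness statement to a combinatorial question about a squarefree monomial ideal via a Gröbner degeneration, after first exploiting the author's earlier linear-type result.

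First I would invoke the theorem of \cite{L}, which says that precisely when $I$ and $J$ are generated by maximal minors of submatrices of $X$, the natural surjection $\mathrm{Sym}(\mathbb{D}) \twoheadrightarrow \mathcal{R}(\mathbb{D})$ is an isomorphism; in other words, $\mathbb{D}$ is of linear type. Writing $S = k[X,Y]/(I(X)+J(Y))$, where $Y=[y_{ij}]$ is a second copy of the variables and $\mathbb{D}$ is generated by the entries $x_{ij}-y_{ij}$, this gives an explicit presentation
\[
\mathcal{R}(\mathbb{D}) \;=\; k[X,Y,T]\big/\bigl(I(X)+J(Y)+L\bigr),
\]
where $L$ is generated by the linear syzygies on the generators $x_{ij}-y_{ij}$ of $\mathbb{D}$ over $S$. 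These syzygies can be read off from the presentation matrices of $I$ and $J$ (Jacobian dual).

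Next I would choose a term order on $k[X,Y,T]$ compatible with the natural bigrading (variables $X,Y$ versus $T$) and compute a Gröbner basis of the defining ideal $I(X)+J(Y)+L$. The classical diagonal term orders on the determinantal part already produce squarefree initial ideals for $I(X)$ and $J(Y)$, so the substantive task is to extend this to the whole presentation in such a way that $\mathrm{in}(L)$ remains squarefree and monomial. Assuming this, the initial ring is a Stanley-Reisner ring, and by the Eagon-Reiner theorem it is Cohen-Macaulay if and only if its Alexander dual has a linear resolution. Combinatorially, the facets of the associated simplicial complex should be describable in terms of non-vanishing minor patterns together with compatible $T$-supports; one would then verify the linear resolution property for the Alexander dual by exhibiting a shelling, or directly by writing down a linear free resolution. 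Since Cohen-Macaulayness is preserved under Gröbner degeneration, the conclusion for $\mathcal{R}(\mathbb{D})$ follows.

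The step I expect to be hardest is the middle one: namely, producing an explicit and tractable description of $L$ (the linear $T$-relations) and then verifying that a suitable term order yields a squarefree initial ideal whose Alexander dual has linear resolution. This is where the hypothesis that $I$ and $J$ are generated by \emph{maximal} minors of submatrices is crucial, because that hypothesis simultaneously controls the generators of the determinantal ideals (via the Eagon--Northcott-type resolutions) and the syzygies defining $\mathbb{D}$, keeping the combinatorics of the initial complex under control. Conditional on that combinatorial step, the rest of the argument is formal.
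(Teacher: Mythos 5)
Your overall strategy is exactly the one the paper follows: linear type of $\mathbb{D}$ from \cite{L}, an explicit Gr\"obner basis of the defining ideal $\mathcal{K}$ with squarefree initial ideal (also imported from \cite{L}), Eagon--Reiner to convert Cohen--Macaulayness of $k[X,Y,Z]/\mathrm{in}(\mathcal{K})$ into a linear-resolution statement for the Alexander dual, and transfer back to $\mathcal{R}(\mathbb{D})$ via the standard fact that Cohen--Macaulayness passes from $R/\mathrm{in}(\mathcal{K})$ to $R/\mathcal{K}$.

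The problem is that you have explicitly deferred the step that constitutes essentially the entire content of the paper. The Gr\"obner basis is not something you would "compute" in passing: even granting the generators $g_{ij,lk}$ and $f_{a_1,\dots,a_{s_1}}$ of $\mathcal{K}$, the reduced basis (Theorem \ref{GB}) contains roughly a dozen intricate families of elements ($f^{l,k}$, $U$, $W$, $W^{p_1+1,v}$, $V$, $V^{k,w}$, $H$, $I$, ${}^{k,w}I$), and the resulting initial ideal (Corollary \ref{ininital}) is a sum of twelve correspondingly complicated squarefree monomial ideals. Computing the Alexander dual then requires intersecting the duals of all twelve pieces (Lemma \ref{ADL}, resting on the combinatorial Lemma \ref{ADX}). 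Finally, the paper does \emph{not} establish the linear resolution by a shelling or by writing down a resolution, as you suggest; instead it uses the criterion that an ideal generated in a single degree $d$ has a linear resolution iff its regularity equals $d$ (Fact \ref{linearAndreg}), and proves the regularity bound by an induction on $n$ built on a long filtration $J_0\subset J_1\subset\cdots$ whose successive quotients have the form $P/PL$ with $L$ generated by variables forming a regular sequence modulo $P$ (Lemmas \ref{REGX} and \ref{REGL}). A shelling argument for a complex of this complexity is not obviously available, so even your proposed method for the deferred step would likely need to be replaced. In short: right road map, but the proof itself --- the dual computation and the regularity filtration --- is entirely absent, and conditional statements of the form "assuming the combinatorics works out" cannot be accepted as a proof here, since that combinatorics is precisely what the theorem's difficulty consists of.
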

In \cite{L}, the defining ideals of Rees algebras of diagonal ideals
have been determined in the setting of theorem \ref{CM}. Let $\mathcal{K}$
be the defining ideal of the Rees algebra of $\mathbb{D}$. By the
proposition below, we deduce that $\mathcal{K}$ is Cohen-Macaulay
once we show $\mbox{in}(\mathcal{K})$ is Cohen-Macaulay. 
\begin{prop}
$(a)\:$\cite{E}$[15.15]$ Let $R$ be a polynomial ring over a field
$k$, $>$ be a monomial order on $R$, $I$ an ideal of $R$ and
$\mathrm{in}(I)$ the initial ideal of $I$ with respect to the term
order $>$. Let $a_{1},....,a_{r}$ be polynomials in $R$ such that
$\mathrm{in}(a_{1}),...,\mathrm{in}(a_{r})$ form a regular sequence
on $R/\mathrm{in}(I)$. Then $a_{1},...,a_{r}$ is a regular sequence
on $R/I$.

\begin{flushleft}
$\mathrm{(b)}\mathrm{\,}$\cite{E}$[15.16,15.17]$ If $R/\mathrm{in}(I)$
is Cohen-Macaulay, then so is $R/I$. 
\par\end{flushleft}
\end{prop}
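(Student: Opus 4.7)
The plan is to realize $R/I$ and $R/\mathrm{in}(I)$ as the general and special fibers of a one-parameter flat family over $k[t]$, and then to transfer the regular-sequence and Cohen-Macaulay properties from the special to the general fiber by a standard deformation argument. First I would replace the monomial order $>$ by an integer weight vector $w=(w_{1},\ldots,w_{n})$ that agrees with $>$ on the finitely many monomials appearing in a Gr\"{o}bner basis of $I$ and in $a_{1},\ldots,a_{r}$; such a $w$ exists by an approximation argument. For $f\in R$ of $w$-degree $d$, I would introduce its $w$-homogenization $\widetilde{f}(x,t)=t^{d}f(t^{-w_{1}}x_{1},\ldots,t^{-w_{n}}x_{n})\in R[t]$, so that $\widetilde{f}|_{t=1}=f$ and $\widetilde{f}|_{t=0}=\mathrm{in}_{w}(f)$, and let $\widetilde{I}\subset R[t]$ be the ideal generated by the homogenizations of a Gr\"{o}bner basis of $I$.

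The crucial technical step is to prove that $R[t]/\widetilde{I}$ is free as a $k[t]$-module, with the monomials outside $\mathrm{in}(I)$ forming a $k[t]$-basis. Once this is established, $t$ is a nonzerodivisor on $R[t]/\widetilde{I}$, the fiber at $t=0$ recovers $R/\mathrm{in}(I)$, and the fiber at $t=1$ recovers $R/I$.

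For part (a), I would argue by induction on $r$: flatness together with the hypothesis that $\mathrm{in}(a_{1}),\ldots,\mathrm{in}(a_{r})$ is a regular sequence on $R/\mathrm{in}(I)$ implies that each successive quotient $R[t]/(\widetilde{I},\widetilde{a}_{1},\ldots,\widetilde{a}_{j})$ remains $k[t]$-flat and that $\widetilde{a}_{j+1}$ is a nonzerodivisor on it, since its reduction modulo $t$ is a nonzerodivisor on the special fiber. Specializing at $t=1$ then yields that $a_{1},\ldots,a_{r}$ is a regular sequence on $R/I$. For part (b), I would combine (a) with the identity $\dim R/I=\dim R/\mathrm{in}(I)=d$, which follows because $R/I$ and $R/\mathrm{in}(I)$ share the same Hilbert function. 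After a generic linear change of coordinates (extending the field if necessary), pick linear forms $\ell_{1},\ldots,\ell_{d}$ whose initial forms are a system of parameters on $R/\mathrm{in}(I)$; by Cohen-Macaulayness of the latter this system of parameters is automatically a regular sequence, and part (a) lifts it to a regular sequence $\ell_{1},\ldots,\ell_{d}$ on $R/I$, giving $\mathrm{depth}(R/I)\geq d=\dim R/I$.

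The main obstacle is establishing the $k[t]$-flatness of $R[t]/\widetilde{I}$, which is the heart of the Gr\"{o}bner deformation theorem and is verified via a Hilbert-function comparison in the graded case. A subtler point inside the induction of part (a) is that $k[t]$-flatness must be preserved under each quotient by $\widetilde{a}_{j+1}$; this follows from the local criterion of flatness applied to the short exact sequence $0\to M\xrightarrow{\widetilde{a}_{j+1}}M\to M/\widetilde{a}_{j+1}M\to 0$, since modding out first by $t$ yields the special fiber on which $\widetilde{a}_{j+1}$ is, by the inductive hypothesis, a nonzerodivisor.
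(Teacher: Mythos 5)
The paper offers no proof of this proposition at all; it is quoted from [E, 15.15--15.17]. Your deformation setup (a weight vector representing the order on the relevant monomials, the $w$-homogenization, freeness of $R[t]/\widetilde{I}$ over $k[t]$, fibers at $t=0$ and $t=1$) and your induction for part (a) are exactly the standard argument behind that citation, and part (a) is correct in outline, modulo the flatness lemma you rightly identify as the technical heart.

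Part (b), however, has a genuine gap. You propose to find linear forms $\ell_{1},\dots,\ell_{d}$ whose initial forms are a system of parameters on $R/\mathrm{in}(I)$ and then invoke (a). First, the ``generic linear change of coordinates'' is not available: changing coordinates changes $\mathrm{in}(I)$, while the hypothesis is that $R/\mathrm{in}(I)$ is Cohen--Macaulay for the given order and the given variables, so the initial terms must be taken in the original coordinates. Second, with respect to a monomial order the initial term of any polynomial --- in particular of a linear form --- is a single monomial, and a Cohen--Macaulay monomial quotient need not admit a regular sequence of monomials of length $d$: for $\mathrm{in}(I)=(xy)\subset k[x,y]$ the ring $k[x,y]/(xy)$ is Cohen--Macaulay of dimension $1$, yet every nonconstant monomial is a zerodivisor on it, so there is no polynomial $a_{1}$ whatsoever with $\mathrm{in}(a_{1})$ a nonzerodivisor modulo $\mathrm{in}(I)$. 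Hence (b) cannot be reduced to (a) as you describe; it has to be extracted from the flat family directly, for instance via upper semicontinuity of graded Betti numbers (or local cohomology) along the degeneration, which gives $\mathrm{depth}(R/I)\geq\mathrm{depth}(R/\mathrm{in}(I))$ and, combined with $\dim R/I=\dim R/\mathrm{in}(I)$, yields Cohen--Macaulayness; alternatively, one uses that $t$ is a nonzerodivisor on the total space, so the total space is Cohen--Macaulay along the special fiber, spreads this out by openness of the Cohen--Macaulay locus together with the $\mathbb{G}_{m}$-action contracting to $t=0$, and then cuts by the regular element $t-1$. That is the route taken in the results of [E] that the paper cites.
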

We use combinatorial commutative algebra to show $\mbox{in}(\mathcal{K})$
is Cohen-Macaulay. With respect to a suitable term order, $\mbox{in}(\mathcal{K})$
is generated by square-free monomials \cite{L}. Square-free monomial
ideals in a polynomial ring are also known as Stanley\textendash{}Reisner
ideals. This leads us to consider Alexander dual ideals:
\begin{thm}
\label{ER} \textup{\cite{E-R}} Let $I$ be a square-free monomial
ideal in a polynomial ring $R$. The ring $R/I$ is Cohen-Macaulay
if and only if the Alexander dual ideal $I^{*}$ has a linear free
resolution.
\end{thm}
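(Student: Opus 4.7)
The plan is to recast both sides of the equivalence as vanishing patterns of multigraded Betti numbers on the Stanley--Reisner complex $\Delta$ of $I$, and then match them via Hochster's formula together with combinatorial Alexander duality.

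First, I would set up notation: write $I = I_{\Delta}$ for the Stanley--Reisner ideal of a simplicial complex $\Delta$ on vertex set $[n] = \{1,\dots,n\}$, so that $I^{*} = I_{\Delta^{*}}$, where $\sigma \in \Delta^{*}$ iff $[n]\setminus \sigma \notin \Delta$. On the ``primal'' side, I would invoke Reisner's criterion: $R/I_{\Delta}$ is Cohen--Macaulay iff for every face $\sigma \in \Delta$ one has $\tilde{H}_{i}(\mathrm{lk}_{\Delta}(\sigma); k) = 0$ for all $i < \dim \mathrm{lk}_{\Delta}(\sigma)$. Cohen--Macaulayness also forces $\Delta$ to be pure of some dimension $d-1$.

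Next, on the ``dual'' side, I would apply Hochster's formula to compute the multigraded Betti numbers of $I^{*}$: for each $W \subseteq [n]$,
$$\beta_{i,W}(I^{*}) \;=\; \dim_{k}\, \tilde{H}_{|W|-i-2}\bigl((\Delta^{*})_{W};\, k\bigr),$$
where $(\Delta^{*})_{W}$ denotes the induced subcomplex of $\Delta^{*}$ on $W$. Since minimal generators of $I^{*}$ correspond to complements of facets of $\Delta$, existence of a linear resolution forces $\Delta$ to be pure of dimension $d-1$ and means precisely that $\beta_{i,W}(I^{*}) = 0$ unless $|W| = n-d+i$.

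Finally, I would invoke the combinatorial Alexander duality isomorphism
$$\tilde{H}_{j}\bigl((\Delta^{*})_{W};\, k\bigr) \;\cong\; \tilde{H}^{\,|W|-j-3}\bigl(\mathrm{lk}_{\Delta}([n]\setminus W);\, k\bigr),$$
with the link taken inside $\Delta$ (and understood to vanish when $[n]\setminus W\notin\Delta$). Substituting into the Hochster expression and setting $\sigma = [n]\setminus W$, the Betti vanishing ``outside the linear strand'' translates directly into the homology vanishing demanded by Reisner's criterion at $\sigma$, for every face $\sigma$ and every homological degree. Since the translation is an equivalence, both implications of the theorem fall out at once.

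The main obstacle will be purely combinatorial bookkeeping: aligning the indices from Hochster's formula, the shift between an ideal and its quotient, and the dimension shift in Alexander duality, so that the linear-strand condition $|W| = n-d+i$ on the dual side corresponds bit-for-bit to $i < \dim \mathrm{lk}_{\Delta}(\sigma)$ on the primal side. Once the indexing is pinned down, no further homological work is needed.
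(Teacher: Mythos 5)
This statement is not proved in the paper at all: it is the Eagon--Reiner theorem, quoted verbatim from \cite{E-R} and used as a black box. Your sketch is essentially the original Eagon--Reiner argument, and it is correct in outline. The three ingredients you name do combine as claimed: the identity $(\Delta^{*})_{W}=(\mathrm{lk}_{\Delta}([n]\setminus W))^{*}$ (dual taken inside $W$, valid when $[n]\setminus W\in\Delta$, with both sides acyclic otherwise since $(\Delta^{*})_{W}$ is then a full simplex) feeds combinatorial Alexander duality into Hochster's formula and yields the ``dual Hochster'' formula $\beta_{i,W}(I^{*})=\dim_{k}\tilde{H}_{i-1}(\mathrm{lk}_{\Delta}([n]\setminus W);k)$, after which linearity of the resolution ($\beta_{i,W}=0$ unless $|W|=n-d+i$, i.e.\ unless $i-1=\dim\mathrm{lk}_{\Delta}([n]\setminus W)$ when $\Delta$ is pure) matches Reisner's criterion exactly, using that homology above the dimension of the link vanishes automatically. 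The only points you should make explicit in a full write-up are the two purity statements you already flag (Cohen--Macaulay complexes are pure; generation of $I^{*}$ in a single degree forces all facets of $\Delta$ to have the same cardinality, since the minimal generators of $I^{*}$ are the complements of the facets) and the degenerate case $W$ with $[n]\setminus W\notin\Delta$. For comparison, the now-common textbook route is different: one proves Terai's formula $\mathrm{reg}(I_{\Delta^{*}})=\mathrm{pd}(R/I_{\Delta})$ and concludes via Auslander--Buchsbaum; that version buys the stronger quantitative statement, whereas your direct multigraded comparison is more elementary and gives the equivalence face by face.
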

With Theorem \ref{ER}, we need to show that $(\mbox{in}(\mathcal{K}))^{*}$,
the Alexander dual ideal of $\mbox{in}(\mathcal{K})$, has a linear
free resolution. To do so, we find a suitable filtration starting
from the Alexander dual ideal of $\mbox{in}(\mathcal{K})$.

\medskip{}

\begin{flushleft}
\textbf{Acknowledgments:} This work is based on author's Ph. D. thesis
from Purdue University under the direction of Professor Bernd Ulrich.
The author is very grateful for so many useful suggestions from Professor
Ulrich.
\par\end{flushleft}

\section{Defining Equations of Rees Algebras}

Let $k$ be a field, $2\leq m\leq n$ integers, \textcolor{black}{$X_{mn}=[x_{ij}],\: Y_{mn}=[y_{ij}],$}
$Z_{mn}=[z_{ij}]$, $m$ by $n$ matrices of variables over $k$.
Let\textcolor{black}{{} $2\leq s_{i}\leq t_{i}$ integers, and $X_{s_{1}t_{1}}$,
$Y_{s_{2}t_{2}}$ are the submatrices of $X$ and $Y$ coming from
the first $s_{i}$ rows and first $t_{i}$ columns.} $I=I_{s_{1}}(X_{s_{1}t_{1}})$,
$J=I_{s_{2}}(X_{s_{2}t_{2}})$ the ideals of $k[X]$ generated by
the maximal minors of $X_{s_{1}t_{1}}$ and the maximal minors of
$X_{s_{2}t_{2}}$. Let $R_{1}=k[X]/I$, $R_{2}=k[X]/J$ be two determinantal
rings. We consider the diagonal ideal $\mathbb{D}$ of $R_{1}\otimes_{k}R_{2}$,
defined via the exact sequence \[
0\longrightarrow\mathbb{D}\longrightarrow S=R_{1}\otimes_{k}R_{2}\overset{_{\mathrm{mult.}}}{\longrightarrow}k[X]/(I+J)\longrightarrow0.\]
 The ideal $\mathbb{D}$ is generated by the images of $x_{ij}\otimes1-1\otimes x_{ij}$
in $R_{1}\otimes_{k}R_{2}$. 

\textcolor{black}{We write the diagonal ideal $\mathbb{D}=(\{x_{ij}-y_{ij}\})$
in \[
S=k[X_{mn},Y_{mn}]/(I_{s_{1}}(X_{s_{1}t_{1}}),I_{s_{2}}(Y_{s_{2}t_{2}}))\cong R_{1}\otimes_{k}R_{2}.\]
We have a presentation of $\mathbb{D}$,\[
\begin{array}{cccccc}
S^{l} & \overset{\phi}{\longrightarrow} & S^{mn} & \longrightarrow\mathbb{D} & \longrightarrow & 0\end{array}\]
From this we obtain a presentation of the symmetric algebra of $\mathbb{D}$,
\[
0\rightarrow(\mbox{image}(\phi))=J\longrightarrow\mbox{Sym}(S^{mn})=S[Z_{mn}]\longrightarrow\mbox{Sym}(\mathbb{D})\rightarrow0.\]
Here $J$ is the ideal generated by the entries of the row vector
$[z_{11},z_{12},...,z_{1n},....,z_{mn}]\cdot\phi$.} Hence \textcolor{black}{\[
\mbox{Sym}(\mathbb{D})\cong S[Z_{mn}]/J,\]
}where $J$ is generated by linear forms in the variables $z_{ij}$.
We write $\mathcal{R}(\mathbb{D})=S[Z_{mn}]/K$, $J\subset K$. In
general $K$ is not generated by linear forms. We can rewrite $\mbox{Sym}(\mathbb{D})=S[Z_{mn}]/J=k[X_{mn},Y_{mn,}Z_{mn}]/\mathcal{J}$
and $\mathcal{R}(\mathbb{D})=k[X_{mn},Y_{mn},Z_{mn}]/\mathcal{K}$.
\begin{thm}
\label{K}\cite{L} Notation as above. Let $X_{a_{1}...a_{s_{1}}}$
be the $s_{1}$ by $s_{1}$ submatrix of $X_{s_{1}t_{1}}$ with columns
$a_{1},...,a_{s_{1}}$, $Y_{b_{1}...b_{s_{2}}}$ the $s_{2}$ by $s_{2}$
submatrix of $Y_{s_{2}t_{2}}$ with columns $b_{1},...,b_{s_{2}}$,
$X_{a_{1}...a_{s_{1}}}^{l,k}$ the $k-l+1$ by $s_{1}$ submatrix
of $X$ with rows $l,\, l+1,..,k$ and columns $a_{1},...,a_{s_{1}}$,
and similarly for $Y$ and $Z$. 

We define \[
g_{ij,lk}=\left|\begin{array}{cc}
z_{ij} & z_{lk}\\
x_{ij}-y_{ij} & x_{lk}-y_{lk}\end{array}\right|\]
\[
f_{a_{1},...,a_{s_{1}}}=\sum_{q=1}^{s_{2}}(-1)^{q+1}\left|\left[\begin{array}{c}
Z^{q,q}\\
Y^{1,q-1}\\
X^{q+1,m}\end{array}\right]_{a_{1}...a_{s_{1}}}\right|,\]
where $1\leq a_{1}<a_{2}<...<a_{s_{1}}\leq\mathrm{min}(t_{1},t_{2})$
and $1\leq i\leq m$, $1\leq l\leq m$, $1\leq j\leq n$, $1\leq k\leq n$. 

Then $\mathcal{K}=(I_{s_{1}}(X_{s_{1}t_{1}}),\, I_{s_{2}}(Y_{s_{2}t_{2}}),\, g_{_{ij,lk}},\, f_{a_{1},...,a_{s_{1}}})$.\end{thm}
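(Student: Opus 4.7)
The plan is to establish the two containments between $\mathcal{K}$ and the ideal $\mathcal{K}'$ generated by the listed elements. The inclusion $\mathcal{K}' \subseteq \mathcal{K}$ is a direct verification: the generators of $I_{s_{1}}(X_{s_{1}t_{1}})$ and $I_{s_{2}}(Y_{s_{2}t_{2}})$ lie in $\mathcal{K}$ by the presentation of $S$; the $g_{ij,lk}$ lift to $S[Z_{mn}]$ the tautological Koszul-style relation $(x_{ij}-y_{ij})(x_{lk}-y_{lk})t - (x_{lk}-y_{lk})(x_{ij}-y_{ij})t = 0$ in $\mathcal{R}(\mathbb{D})$, using that $z_{ij}$ is a preimage of $(x_{ij}-y_{ij})t$. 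For $f_{a_{1},\ldots,a_{s_{1}}}$, I would start from the vanishing of $|X^{1,s_{1}}_{a_{1}\ldots a_{s_{1}}}|$ in $R_{1}$ (and analogously for $R_{2}$) and perform a row-by-row telescoping substitution $x_{ij} = y_{ij} + (x_{ij}-y_{ij})$, replacing each difference $x_{ij}-y_{ij}$ by $z_{ij}$. The alternating principal sum produces exactly $f_{a_{1},\ldots,a_{s_{1}}}$, while any error term containing two or more $Y$-rows among the $s_{2}$ distinguished rows lies in $I_{s_{2}}(Y_{s_{2}t_{2}})$.

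For the harder direction $\mathcal{K} \subseteq \mathcal{K}'$, I would fix a term order on $k[X_{mn},Y_{mn},Z_{mn}]$ placing the $z_{ij}$ as the largest variables (the order of \cite{L}) so that the leading monomials are of the form $z \cdot x$ or $z \cdot y$ for the $g$'s, a square-free product of $z$-entries along the natural diagonal for each $f_{a_{1},\ldots,a_{s_{1}}}$, and the standard diagonal monomials for the $X$- and $Y$-minors. Running Buchberger on the combined list, the $S$-pairs among the $g$'s reduce via a Koszul computation; the $S$-pairs among the $f$'s and between $f$'s and $g$'s reduce using Pl\"ucker-type row identities for $X$, for $Y$, and for the hybrid rows that appear in the definition of $f$. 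This would show the listed elements form a Gr\"obner basis of $\mathcal{K}'$ with square-free initial ideal. Since $\mathcal{K}' \subseteq \mathcal{K}$ forces $\mathrm{in}(\mathcal{K}') \subseteq \mathrm{in}(\mathcal{K})$, computing $\dim k[X,Y,Z]/\mathrm{in}(\mathcal{K}')$ combinatorially and matching it with $\dim \mathcal{R}(\mathbb{D}) = \dim S + 1$, together with the primality of $\mathcal{K}$ (the Rees algebra of an ideal in a domain is a domain), yields equality.

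The main obstacle is the Buchberger reduction of the $S$-pairs involving $f_{a_{1},\ldots,a_{s_{1}}}$: the telescoping alternating structure must interact correctly with Pl\"ucker relations on both $X$-rows and $Y$-rows simultaneously, while the $Z$-rows inserted by $f$ have to be tracked separately. A bookkeeping indexed by how many rows have been \emph{swapped from $X$ through $Z$ to $Y$} appears necessary so that each remainder is visibly a combination of the listed generators. Once the reductions go through, the square-freeness of $\mathrm{in}(\mathcal{K}')$ is immediate from the chosen leading terms and sets up the Stanley--Reisner / Alexander dual framework invoked in the rest of the paper for the proof of Theorem \ref{CM}.
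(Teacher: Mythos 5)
This theorem is not proved in the present paper: it is quoted from \cite{L} (the author's companion preprint), so there is no internal proof to compare against. Judged on its own terms, your proposal has the right general shape --- the easy containment $\mathcal{K}'\subseteq\mathcal{K}$ via the Koszul relations for the $g_{ij,lk}$ and the telescoping substitution $x_{ij}=y_{ij}+(x_{ij}-y_{ij})$ for the $f_{a_1,\ldots,a_{s_1}}$ is sound in outline --- but the hard direction as you describe it has two genuine gaps.

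First, the claim that the $S$-pairs of the listed generators all reduce to zero against that same list, so that ``the listed elements form a Gr\"obner basis of $\mathcal{K}'$,'' is contradicted by Theorem \ref{GB} of this very paper: the actual Gr\"obner basis $\mathcal{G}$ of $\mathcal{K}$ for the stated term order contains, in addition to the minors, the $g$'s and the $f^{l,k}$'s, the large auxiliary families $U$, $W$, $W^{p_1+1,v}$, $V$, $V^{k,w}$, $H^{l,k,q}$, $I^{l,k,q}$ and $^{k,w}I^{l,k,q}$ of Definitions \ref{u}--\ref{ik}. Buchberger's algorithm applied to your list does not close up after one round; it generates all of these new elements, and the combinatorics of tracking them (which is exactly the ``bookkeeping'' you flag as the main obstacle) is the bulk of the work in \cite{L}. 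Your plan is therefore not wrong so much as it stops where the real proof begins. Second, the concluding step ``$\dim k[X,Y,Z]/\mathrm{in}(\mathcal{K}')=\dim\mathcal{R}(\mathbb{D})$ plus primality of $\mathcal{K}$ yields equality'' is logically insufficient: $(xy)\subsetneq(x)$ is a height-one ideal contained in a prime of the same height, yet the two are unequal. Equal dimension of a subideal and primality of the ambient ideal do not force equality. You need something stronger --- equality of Hilbert functions (computable from $\mathrm{in}(\mathcal{K}')$ once the Gr\"obner basis is in hand, but then you must independently know the Hilbert function of $\mathcal{R}(\mathbb{D})$), or primality/unmixedness of $\mathcal{K}'$ itself combined with a multiplicity comparison, or a saturation argument showing $\mathcal{K}'=\mathcal{K}'\colon g^{\infty}$ for an element $g$ inverting which identifies the two ideals. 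As written, the final inference does not close.
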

\begin{defn}
\label{f} Let $1\leq a_{1}<a_{2}<...<a_{s_{1}+k-1}\leq\mathrm{min}(t_{1},t_{2})$,
and $1\leq l\leq k\leq s_{2}$, we define $f_{a_{1},...,a_{s_{1}+k-1}}^{l,k}$
as follow: \begin{eqnarray*}
 & f_{a_{1},...,a_{s_{1}+k-1}}^{l,k}:=\\
 & {\displaystyle \sum_{r=k}^{s_{2}}}(-1)^{r+1}\left|\left[\begin{array}{c}
Z^{l,k-1}\\
Z^{r,r}\\
X^{1,l-1}\\
Y^{1,r-1}\\
Y^{r+1,s_{1}}\end{array}\right]_{a_{1},...,a_{s_{1}+k-1}}\right|\\
 & +{\displaystyle \sum_{r=k}^{s_{2}}(-1)^{r+1}\sum_{u=r+1}^{s_{1}}\left|\left[\begin{array}{c}
Z^{l,k-1}\\
X^{r,r}-Y^{r,r}\\
X^{1,l-1}\\
Y^{1,r-1}\\
Y^{r+1,u-1}\\
Z^{u,u}\\
X^{u+1,s_{1}}\end{array}\right]_{a_{1},...,a_{s_{1}+k-1}}\right|}.\end{eqnarray*}

\end{defn}
\medskip{}

\begin{defn}
\label{u}Let $1\leq p_{1}\leq m$, $1\leq q_{1}\leq n$, $ $$a_{s_{1}}<...<a_{j}\leq q_{1}<a_{j-1}<...<a_{1}$.
We define $U_{p_{1},q_{1},a_{1},...,a_{s_{1}}}$ as follows: \begin{eqnarray*}
U_{p_{1},q_{1},a_{s_{1}},...,a_{1}}:=z_{p_{1}q_{1}}\left|\left[\begin{array}{cccccc}
 &  & X^{1,p_{1}-1}\\
x_{p_{1}a_{s_{1}}} & ... & x_{p_{1}a_{j}} & y_{p_{1}a_{j-1}} & ... & y_{p_{1}a_{1}}\\
 &  & Y^{p_{1}+1,s_{1}}\end{array}\right]\right|\\
+\sum_{k=j+1}^{m}(x_{p_{1}q_{1}}-y_{p_{1}q_{1}})(-1)^{k+p_{1}}z_{p_{1}a_{k}}|X_{a_{1},...,\hat{a_{k}},..a_{m}}^{1,...,\hat{p_{1}},...,m}|+\\
\sum_{u=p_{1}+1}^{s_{1}}(x_{p_{1}q_{1}}-y_{p_{1}q_{1}})\left|\left[\begin{array}{cccccc}
 &  & X^{1,p_{1}-1}\\
x_{p_{1}a_{s_{1}}} & ... & x_{p_{1}a_{j}} & y_{p_{1}a_{j-1}} & ... & y_{p_{1}a_{1}}\\
 &  & Y^{p_{1}+1,u-1}\\
 &  & Z^{u,u}\\
 &  & X^{u+1,s_{1}}\end{array}\right]_{a_{1},...,a_{s_{1}}}\right|.\end{eqnarray*}

\end{defn}
\medskip{}

\begin{defn}
\label{w}Let $1\leq b_{s_{2}}<...<b_{1}\leq$, $1\leq p_{1}\leq m$,
$1\leq q_{1}\leq n$, $a_{s_{1}}<...<a_{s_{2}+1}<a_{p_{1}}<...<a_{1}$
and $a_{p_{1}}\leq q_{1}$. Let $i$ be integer such that $1\leq i\leq p$
and $a_{s_{2}+1}<b_{s_{2}}<...<b_{i+1}<a_{p_{1}-1}\leq b_{i}$ and
$b_{l}\neq a_{p_{1}}$ for $l\geq i+1$. 

We define $M_{12}$ as follows:\[
M_{12}=z_{p_{1}q_{1}}x_{p_{1}a_{p_{1}}}\left|\left[\begin{array}{c}
X^{1,p_{1}-1}\\
Y^{s_{2}+1,s_{1}}\end{array}\right]_{a_{1},...,a_{p_{1-1}},a_{s_{2}+1},...,a_{s_{1}}}\right|.\]

We define \begin{eqnarray*}
 & W_{p_{1},q_{1},a_{1},...,a_{p1},a_{s_{2}+1},...,a_{s_{1}},b_{1},...,b_{s_{2}}}:=\\
 & M_{12}|Y_{b_{1},...,b_{s_{2}}}^{1,s_{1}}|-\left|Y_{b_{1},..,b_{i}}^{1,i}\right|{\displaystyle \sum_{\{c_{i+1},...,c_{p_{1}},d_{p_{1}+1},...,d_{s_{2}}\}=\{b_{i+1},....,b_{s_{2}}\}}}\\
 & \left|Y_{c_{i+1},..,c_{p_{1}}}^{i+1,p_{1}}\right|U_{p_{1}q_{1},a_{1},...,a_{p_{1-1}},a_{p_{1}},d_{p_{1}+1},...,d_{s_{2}},a_{s_{2}+1},...,a_{s_{1}}}.\end{eqnarray*}

\medskip{}

\label{wp}Let $1\leq p_{1}\leq m$, $1\leq q_{1}\leq n$, $v=p_{1}+1,...,s_{2}-1$,
$1\leq a_{s_{1}}<...<a_{s_{2}+1}<a_{p_{1}}<...<a_{1}\leq t_{1}$ and
$a_{p_{1}}\leq q_{1}$. Let $i$ be an integer, $1\leq i\leq p$ and
let $a_{s_{2}+1}<b_{s_{2}}<...<b_{v+2}<b_{v}^{'}<...<b_{p_{1}+1}^{'}<b_{p_{1}}<...<b_{i+1}<a_{p_{1}-1}\leq b_{p_{1}+1}$
and $b_{l}^{'}\neq a_{p_{1}}$ for $l\geq i+1$ and $b_{v-1}^{'}\leq b_{v+1}$.
Let $a_{s_{1}}<....<a_{s_{2}+1}<b_{s_{2}}<...<b_{v+2}<b_{v+1}<b_{v}<b_{v-1}<...<b_{p_{1}+2}<a_{p_{1}}<a_{p_{1}-1}\leq b_{p_{1}+1}$,
and $b_{r}^{'}\leq b_{r+2}<b_{r+1}$ for $r=p_{1},...,v-2$. 

We define \begin{eqnarray*}
 & W_{p_{1},q_{1},a_{1},...,a_{p1},a_{s_{2}+1},...,a_{s_{1}},b_{1},...,b_{p_{1}+1},b_{p_{1}+2},b_{p_{1}+3}.,b_{s_{2}},b_{p_{1}+1}^{'},b_{p_{1}+2}^{'},...,b_{v}^{'}}^{p_{1}+1,v}:=\\
 & y_{v-1,b_{v-1}^{'}}y_{v,b_{v}}W_{p_{1},q_{1},a_{1},...,a_{p1},a_{s_{2}+1},...,a_{s_{1}},b_{1},...,b_{v-1},b_{v}^{'},b_{v+1},...,b_{s_{2}},b_{p_{1}+1}^{'},b_{p_{1}+2}^{'},...,b_{v-2}^{'}}^{p_{1}+1,v-2}\\
 & -y_{v,b_{v}^{'}}W_{p_{1},q_{1},a_{1},...,a_{p1},a_{s_{2}+1},...,a_{s_{1}},b_{1},...,b_{s_{2}},b_{p_{1}+1}^{'},b_{p_{1}+2}^{'},...,b_{v-1}^{'}}^{p_{1}+1,v-1},\end{eqnarray*}
where \[
W_{p_{1},q_{1},a_{1},...,a_{p_{1}},a_{s_{2}+1},...,a_{s_{1}},b_{1},...,b_{s_{2}}}^{p_{1}+1,p_{1}-1}=U_{p_{1},q_{1},a_{1},...,a_{s_{1}}}\]
 and \[
W_{p_{1}q_{1},a_{1},...,a_{p_{1}},a_{s_{2}+1},...,a_{s_{1}},b_{1},...,b_{s_{2}}}^{p_{1}+1,p_{1}}=W_{p_{1}q_{1},a_{1},..,a_{p_{1}},a_{s_{2}+1},...,a_{s_{1}},b_{1},...,b_{s_{2}}}.\]

\end{defn}
\medskip{}

\begin{defn}
\label{v}Let $b_{s_{1}}<...<b_{1}$, and $1\leq p_{l}<...<p_{k}<b_{s_{1}}<...<b_{s_{2}+1}<c_{s_{2}}<...<c_{k+1}<b_{k-1}<...<b_{1}<a_{k-1}<...<a_{1}\leq t_{1}$. 

Let \[
M_{12}=\left|\left[\begin{array}{c}
Z^{l,k}\\
X^{1,k-1}\\
Y^{s_{2}+1,s_{1}}\end{array}\right]_{p_{l},..,p_{k},a_{1},...,a_{k-1},b_{s_{2}+1},...,b_{s_{1}}}\right|.\]

We define \begin{eqnarray*}
 & V_{p_{l},...,p_{k},a_{1},...,a_{k-1},b_{1},...,b_{s_{1}}}:=\\
 & M_{12}|Y_{b_{1},...,b_{s_{2}}}^{1,s_{2}}|{\displaystyle -\sum_{\{e_{k},c_{k+1},...,c_{s_{2}}\}=\{b_{k},....,b_{s_{2}}\}}}\\
 & \pm y_{ke_{k}}f_{p_{l},...,p_{k},a_{1},...,a_{k-1},b_{1},...,b_{k-1},c_{k+1},...,c_{s_{2}},b_{s_{2}+1},...,b_{s_{1}}}^{l,k}.\end{eqnarray*}

\end{defn}
\medskip{}

\begin{defn}
\label{vk}Let $1\leq l\leq k\leq s_{2}$ and $1\leq p_{l}<...<p_{k}<b_{s_{1}}<...<b_{s_{2}+1}<...<b_{k+1}<b_{k-1}<b_{k}<b_{k-2}<...<b_{1}<a_{l-1}<...<a_{1}\leq t_{1}$.
Let $w=k,...,s_{2}-1$ and $1\leq b_{s_{2}}<...<b_{w+2}<b_{w}^{'}<b_{w-1}^{'}<...<b_{k}^{'}<b_{k-1}<b_{k-2}...<b_{1}\leq t_{2}$
and $b_{w-1}^{'}\leq b_{w+1}$, and $b_{r}^{'}\leq b_{r+2}<b_{r+1}$
for $r=k,...,l-2$. 

We define \begin{eqnarray*}
 & V_{p_{l},...,p_{k},a_{1},...,a_{k-1},b_{1},...,b_{s_{1}},b_{k}^{'},b_{k+1}^{'},...,b_{w}^{'}}^{k,w}:=\\
 & y_{w-1,b_{w-1}}y_{w,b_{w}}V_{p_{l},...,p_{k},a_{1},...,a_{k-1},b_{1},..,b_{w}^{'},...,b_{s_{1}},b_{k}^{'},b_{k+1}^{'},...,b_{w-2}^{'}}^{k,w-2}\\
 & -y_{wb_{w}^{'}}V_{p_{l},...,p_{k},a_{1},...,a_{k-1},b_{1},...,b_{s_{1}},b_{k}^{'},b_{k+1}^{'},...,b_{w-1}^{'}}^{k,w-1} & .\end{eqnarray*}
Here $ $$V_{p_{l},...,p_{k},a_{1},...,a_{k-1},b_{1},...,b_{s_{1}}}^{k,k-2}=V_{p_{l},...,p_{k},a_{1},...,a_{k-1},b_{1},...,b_{s_{1}}}^{k,k-1}=V_{p_{l},...,p_{k},a_{1},...,a_{k-1},b_{1},...,b_{s_{1}}}$.
\end{defn}
\medskip{}

\begin{defn}
\label{h}Let $1\leq l\leq k\leq s_{2}$, $1\leq q\leq n$, $1\leq a_{s_{1}+k-1}<....<a_{1}\leq t_{1}$,
$a_{s_{1}+k-1}<q$, $a_{j+1}\leq q<a_{j}$ for some $j=l-1,...,s_{1}+k-3$.
Let $\overline{f_{a_{1},...,\hat{a_{c}},...,a_{s_{1+k-1}}}^{l,k,x_{l-1}}}$
be the determinant of matrices that coming from deleting row $x_{l-1}$
and column $a_{c}$. We define $H_{a_{1},....,a_{s_{1}+k-1}}^{l,k,q}$
as following \begin{eqnarray*}
H_{a_{1},...,a_{s_{1}+k-1}}^{l,k,q} & = & z_{l-1,q}f_{a_{1},...,a_{s_{1}+k-1}}^{l,k}-\sum_{c=k}^{j}(-1)^{k+c}g_{l-1,q,l-1,a_{c}}\overline{f_{a_{1},...,\hat{a_{c}},...,a_{s_{1+k-1}}}^{l,k,x_{l-1}}}.\end{eqnarray*}

\end{defn}
\medskip{}

\begin{defn}
\label{i}Let $1\leq l\leq k\leq s_{2}$, $1\leq q\leq n$, $1\leq a_{s_{1}+k-1}<....<a_{1}\leq t_{1}$,
$a_{s_{1}+k-1}<q$, $a_{j+1}\leq q<a_{j}$ for some $j=l-1,...,s_{1}+k-3$.
Let $a_{l+s_{2}-1}<b_{s_{2}}<...<b_{k}<a_{l-1+k-1}=b_{k-1}<....<a_{l-1+1}=b_{1}$. 

Let \[
M=z_{l-1,q}x_{l-1,a_{j+1}}\left|\left[\begin{array}{c}
Z^{l,k}\\
X^{1,l-2}\\
Y^{s_{2}+1,s_{1}}\end{array}\right]_{a_{s_{1}+k-1},...,a_{l+s_{2}-1},a_{l-1},...,a_{1}}\right|.\]

We define \begin{eqnarray*}
 &  & I_{a_{s_{1}+k-1},...,a_{l+s_{2}-1},a_{l-1},...,a_{1},b_{1},...,b_{s_{2}}}^{l,k,q}:=\\
 &  & M|Y_{b_{1},...,b_{s_{2}}}^{1,s_{2}}|-\sum_{\{e_{k},c_{k+1},...,c_{s_{2}}\}=\{b_{k},....,b_{s_{2}}\}}\\
 &  & \pm y_{ke_{k}}H_{a_{s_{1}+k-1},...,a_{l+s_{2}-1},c_{s_{2}},...,c_{k+1},b_{k-1},...,b_{1},a_{l-1},...,a_{1}}^{l,k,q}.\end{eqnarray*}

\end{defn}
\medskip{}

\begin{defn}
\label{ik}Let $1\leq l\leq k\leq s_{2}$, $1\leq q\leq n$, $k\leq w\leq s_{2}-1$,
$1\leq q_{l}<...<q_{k}<b_{s_{1}}<...<b_{s_{2}}<...<b_{k+1}<b_{k-1}<b_{k}<b_{k-2}<...<b_{1}<a_{l-2}<...<a_{1}\leq t_{1}$,
$q_{l}<q$, $q_{l}<a_{l-1}\leq q$ . Let $w=k,...,s_{2}-1$ and $1\leq b_{s_{2}}<...<b_{w+2}<b_{w}^{'}<b_{w-1}^{'}<...<b_{k}^{'}<b_{k-1}<b_{k-2}...<b_{1}\leq t_{2}$
and $b_{w-1}^{'}\leq b_{w+1}$, and $b_{r}^{'}\leq b_{r+2}<b_{r+1}$
for $r=k,...,l-2$. 

We define \begin{eqnarray*}
 & ^{k,w}I_{q_{l},...,q_{k},b_{s_{1}},...,b_{s_{2}},...,b_{1},a_{l-1},...,a_{1},b_{k}^{'},...,b_{w}^{'}}^{l,k,q}:=\\
 & y_{w-1,b_{w-1}^{'}}y_{wb_{w}}{}^{k,w-2}I_{q_{l},...,q_{k},b_{s_{1}},...,b_{s_{2}},...,b_{w}^{'},...,b_{1},a_{l-1},...,a_{1},b_{k}^{'},...,b_{w-2}^{'}}^{l,k,q}\\
 & -y_{wb_{w}^{'}}{}^{k,w-1}I_{q_{l},...,q_{k},b_{s_{1}},...,b_{s_{2}},...,b_{1},a_{l-1},...,a_{1},b_{k}^{'},...,b_{w-1}^{'}}^{l,k,q},\end{eqnarray*}
where \begin{eqnarray*}
^{k,k-2}I_{q_{l},...,q_{k},b_{s_{1}},...,b_{s_{2}},...,b_{1},a_{l-1},...,a_{1}}^{l,k,q} & ={}^{k,k-1}I_{q_{l},...,q_{k},b_{s_{1}},...,b_{s_{2}},...,b_{1},a_{l-1},...,a_{1}}^{l,k,q}\\
=I_{q_{l},...,q_{k},b_{s_{1}},...,b_{s_{2}},...,b_{1},a_{l-1},...,a_{1}}^{l,k,q} &  & .\end{eqnarray*}

\end{defn}
\medskip{}

\begin{thm}
\cite{L}\label{GB}Use the notation of Definition \ref{f}, \ref{u},
\ref{w}, \ref{wp}, \ref{v}, \ref{vk}, \ref{h}, \ref{i}, \ref{ik}
and let $\mathcal{G}:=\{|X_{a_{1},..,a_{s_{1}}}^{1,s_{1}}|$, $|Y_{b_{1},...,b_{s_{2}}}^{1,s_{2}}|$,
$g_{p_{1}q_{1},p_{2}q_{2}}$, $f_{a_{1},...,a_{s_{1}+k-1}}^{l,k}$,
$U_{p_{1},q_{1},a_{1},...,a_{s_{1}}}$ , \\
$W_{p,q,a_{1},...,a_{p},a_{s_{2}+1},...,a_{s_{1}},b_{1},...,b_{s_{2}}}$,
\\
$W_{p_{1},q_{1},a_{1},...,a_{p1},a_{s_{2}+1},...,a_{s_{1}},b_{1},...,b_{p_{1}+1},b_{p_{1}+2},b_{p_{1}+3}.,b_{s_{2}},b_{p_{1}+1}^{'},b_{p_{1}+2}^{'},...,b_{v}^{'}}^{p_{1}+1,v}$,
\\
$V_{p_{l},...,p_{k},a_{1},...,a_{k-1},b_{1},...,b_{s_{1}}}$, $V_{p_{l},...,p_{k},a_{1},...,a_{k-1},b_{1},...,b_{s_{1}},b_{k}^{'},b_{k+1}^{'},...,b_{w}^{'}}^{k,w}$,
$H_{a_{1},...,a_{s_{1}+k-1}}^{l,k,q}$ ,\\
 \textup{$I_{a_{s_{1}+k-1},...,a_{l+s_{2}-1},a_{l-1},...,a_{1},b_{1},...,b_{s_{2}}}^{l,k,q}$,}
$^{k,w}I_{q_{l},...,q_{k},b_{s_{1}},...,b_{s_{2}},...,b_{1},a_{l-1},...,a_{1},b_{k}^{'},...,b_{w}^{'}}^{l,k,q}\}$. 

The $\mathcal{G}$ is a Groebner basis of $\mathcal{K}$ with respect
to the lexicographic term order and the variables ordered by $z_{ij}>x_{lk}>y_{pq}$
for any $i,j,l,k,p,q$ and $x_{ij}<x_{lk}$, $y_{ij}<y_{lk}$ if $i>l$
or $i=l$ and $j<k$ and $z_{ij}<z_{lk}$ if $i>l$ or if $i=l$ and
$j>k$.\end{thm}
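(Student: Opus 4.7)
The plan is to apply Buchberger's criterion: verify first that every polynomial in $\mathcal{G}$ lies in $\mathcal{K}$, and then show that every S-polynomial $S(p,q)$ for $p,q\in\mathcal{G}$ reduces to zero modulo $\mathcal{G}$. The containment $\mathcal{G}\subset\mathcal{K}$ is immediate for the original Theorem \ref{K} generators $|X^{1,s_1}_{a_1,\ldots,a_{s_1}}|$, $|Y^{1,s_2}_{b_1,\ldots,b_{s_2}}|$, $g_{ij,lk}$, and $f^{1,1}_{a_1,\ldots,a_{s_1}}=f_{a_1,\ldots,a_{s_1}}$. For the remaining polynomial families ($f^{l,k}$, $U$, $W$, $V$, $H$, $I$ and their superscripted variants), I would show containment by Laplace-type expansions: each of these polynomials is essentially a determinant of a block matrix whose rows are mixtures of $X,Y,Z$ rows, and by expanding along a suitable row/column one obtains an expression of the polynomial as a $k[X,Y,Z]$-combination of the four basic generator types, together with earlier-defined polynomials of the same family. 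For the recursively defined $W^{p+1,v}$, $V^{k,w}$, and $^{k,w}I$, an induction on the level parameters $v$ and $w$ reduces to the base cases $W^{p+1,p_1-1}=U$, $V^{k,k-1}=V$, and $^{k,k-1}I=I$.

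The second step is to identify the initial term of each polynomial in $\mathcal{G}$ under the given lex order. Since $z>x>y$ and $z$-variables are ordered in an unusual way (larger row $i$ and smaller column $j$ give larger $z_{ij}$), the leading term of each polynomial will be the unique monomial in that polynomial containing the lexicographically largest product of $z$'s, then $x$'s. In particular, the leading term of $f^{l,k}$ picks out the term where the diagonal of the determinant uses the $Z$ and $X$ rows on top, and similarly for $U$, $V$, $W$ and their recursive versions, where the leading terms are squarefree monomials (as required for the subsequent Stanley--Reisner analysis). This identification gives us candidate generators for $\mathrm{in}(\mathcal{K})$.

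The heart of the proof is the verification of Buchberger's criterion, which I would organize by pairs of generator types. S-pairs among the determinantal generators $|X|$ and $|Y|$ are handled by the classical Gröbner basis theory for maximal minor ideals (Sturmfels). S-pairs between these and $g_{ij,lk}$, or among $g$-polynomials, have a small, explicit form reducing to $\mathcal{K}$-syzygies of $2\times 2$ determinants and maximal minors. The S-pairs involving $f^{l,k}$, $U$, $W$, $V$, $H$, $I$, and their recursive extensions are the new content; here the hope and design is that the recursive definitions of $W^{p+1,v}$ and $V^{k,w}$ (each being a three-term recurrence $y\cdot\text{previous}-y\cdot\text{previous}$) are precisely what is required to cancel the leading terms of the relevant S-pair: when the recursion step is applied and the lower-level polynomial already reduces, the remainder is absorbed by a lower-level element of $\mathcal{G}$ or by one of the $g$-generators.

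The main obstacle will be the sheer combinatorial bookkeeping. With so many families of polynomials, each with several multi-indices whose ordering constraints have to be respected, tracking signs, row/column swaps, and index collisions across dozens of S-pair types is the real work. I would set up a strict induction on the pair of levels $(v,w)$ of the recursively defined polynomials, together with an induction on the size of index sets $\{a_i\},\{b_j\}$, so that each S-pair reduction is reduced to a finite set of base cases. Once those base cases are verified by direct (but lengthy) computation using the Plücker relations for the $X$- and $Y$-minors and the antisymmetry relations $g_{ij,lk}+g_{lk,ij}=0$, Buchberger's criterion follows and $\mathcal{G}$ is a Gröbner basis.
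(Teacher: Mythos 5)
This paper does not prove Theorem \ref{GB} at all: it is imported verbatim from the author's preprint \cite{L}, and the only trace of its proof here is Corollary \ref{ininital}, which records the resulting initial terms. So there is no in-paper argument to compare yours against; your proposal has to stand on its own. As a strategy it is the natural one (Buchberger's criterion, membership of $\mathcal{G}$ in $\mathcal{K}$, identification of leading terms, reduction of S-pairs organized by an induction on the recursion levels $v,w$), and the observation that $\mathcal{G}$ contains the Theorem \ref{K} generators, so that Buchberger plus the containment $\mathcal{G}\subset\mathcal{K}$ suffices, is correct.

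However, as written the proposal is an outline rather than a proof, and the gaps are exactly where the content of the theorem lives. First, you never actually compute the leading terms; for the families $U$, $W$, $V$, $H$, $I$ and their superscripted variants these are highly nontrivial (compare the twelve explicit monomial families in Corollary \ref{ininital}), and the whole S-pair analysis depends on knowing them precisely, including verifying that no unexpected cancellation promotes a different monomial to the top. Second, the membership claim $\mathcal{G}\subset\mathcal{K}$ is not a routine Laplace expansion for the non-determinantal families: $W$, $V$, and $I$ are defined as alternating sums over set partitions of products of minors with $U$-, $f$-, and $H$-polynomials, and showing that such an expression maps to zero under $z_{ij}\mapsto(x_{ij}-y_{ij})t$ modulo $(I_{s_1}(X_{s_1t_1}),I_{s_2}(Y_{s_2t_2}))$ is itself a substantial computation. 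Third, and most importantly, you concede that the S-pair reductions are a ``hope'': for a Gr\"obner basis statement of this combinatorial complexity, the dozens of cross-family S-pair reductions, with their sign and index bookkeeping, are the theorem, and no case is actually carried out. The proposal is a credible plan of attack, but it does not yet constitute a proof.
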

\begin{cor}
\begin{flushleft}
\label{ininital}The initial ideal of \begin{eqnarray*}
\mathcal{K} & = & (h_{X}+h_{Y}+h_{g}+h_{f}+h_{U}+h_{W}+h_{W^{p,q,l}}+h_{V}+\\
 &  & h_{V^{l,k,w}}+h_{H^{l,k,q}}+h_{I^{l,k,q}}+h_{^{k,w}I^{l,k,q}}),\end{eqnarray*}
 where\\
\begin{tabular}{>{\centering}p{0.22in}>{\raggedright}p{4.5in}}
(1) & $h_{X}=(\{x_{1a_{1}}x_{2a_{2}}....x_{s_{1}a_{s_{1}}}|\:1\leq a_{s_{1}}<a_{s_{1}-1}<...<a_{1}\leq t_{1}\})$,\tabularnewline
(2) & $h_{Y}=(\{y_{1a_{1}}y_{2a_{2}}...y_{s_{2}a_{s_{2}}}|\:1\leq a_{s_{2}}<a_{s_{2}-1}<...<a_{1}\leq t_{2}\})$,\tabularnewline
(3) & $h_{g}=(\{z_{ij}x_{lk}|\: i<l\mbox{ or }i=l\mbox{ and }j<k,\})$\tabularnewline
\end{tabular}\\
\begin{tabular}{>{\centering}p{0.22in}>{\centering}p{4.5in}}
(4) & \raggedright{}$h_{f}=(\{z_{lq_{l}}z_{l+1q_{l+1}}...z_{kq_{k}}x_{1a_{1}}...x_{l-1a_{l-1}}y_{1b_{1}}...y_{k-1b_{k-1}}y_{k+1b_{k+1}}...y_{s_{1}b_{s_{1}}}|\:1\leq q_{l}<q_{l+1}<...<q_{k}<b_{s_{1}}<...<b_{k+1}<b_{k-1}<...<b_{1}<a_{l-1}<...<a_{1}\leq t_{1},\mbox{ }1\leq l<k\leq s_{2}\}),$\tabularnewline
\end{tabular} \begin{tabular}{>{\centering}p{0.22in}>{\raggedright}p{4.5in}}
(5) & $h_{U}=(\{z_{pq}x_{1a_{1}}...x_{pa_{p}}y_{p+1a_{p+1}}...y_{s_{1}a_{s_{1}}}|\: p=1,...,s_{1}-1,\:1\leq q\leq n,\:1\leq a_{s_{1}}<a_{s_{1}-1}<...<a_{1}\leq t_{1},\: a_{p}\leq q\}),$\tabularnewline
\end{tabular} \begin{tabular}{>{\centering}p{0.22in}>{\raggedright}p{4.5in}}
(6) & \raggedright{}$h_{W}=(\{z_{pq}x_{1a_{1}}...x_{pa_{p}}y_{1b_{1}}...y_{s_{1}b_{s_{1}}}|\: p=1,...,s_{1}-1,\:1\leq i\leq p,\:1\leq b_{s_{1}}<...<b_{s_{2}+1}<a_{p}<...<a_{1}\leq t_{1},\:1\leq b_{s_{1}}<b_{s_{1}-1}<...<b_{p+2}<b_{p}<b_{p-1}<...<b_{i+1}<b_{p+1}<b_{i}<...<b_{1}\leq t_{2},\:1\leq b_{s_{1}}<...<b_{p+2}<b_{p}<...<b_{i+1}<a_{p-1}<...<a_{1}\leq t_{1},\: b_{l}\neq a_{p},\: l=i+1,...,s_{1},\: a_{p-1}\leq b_{p+1},\: a_{p}\leq q\})$, \tabularnewline
(7) & \raggedright{}$h_{W^{p,q,l}}=(\{z_{pq}x_{1a_{1}}...x_{pa_{p}}y_{1b_{1}}...y_{p-1b_{p-1}}y_{pb_{p}}\mbox{ }y_{p+1b_{p+1}}y_{p+1b_{p+1}^{'}}...y_{vb_{v}}y_{vb_{v}^{'}}$\tabularnewline
 & \raggedright{}$\mbox{ }y_{v+1b_{v+1}}...y_{s_{1}b_{s_{1}}}|1\leq p\leq m,\:1\leq q\leq n,\, v=p+1,...,s_{2}-1,\,1\leq b_{s_{1}}<...<b_{s_{2}+1}<a_{p}<...<a_{1}\leq t_{1},\: a_{p}\leq q,\: b_{s_{2}+1}<b_{s_{2}}<...<b_{v+2}<b_{v}^{'}<...<b_{p+1}^{'}<b_{p-1}<...<b_{i+1}<a_{p-1}\leq b_{p+1}<b_{i}<...<b_{1}\leq t_{2},\:1\leq i\leq p,\: b_{l}^{'}\neq a_{p},\: l\geq i+1,\: b_{v-1}^{'}\leq b_{v+1},\: b_{s_{1}}<....<b_{s_{2}+1}<b_{s_{2}}<...<b_{v+2}<b_{v+1}<b_{v}<b_{v-1}<...<b_{p+2}<b_{p+1}<a_{p}<a_{p-1}\leq b_{p},\: b_{r}^{'}\leq b_{r+2}<b_{r+1},\: r=p,...,v-2\})$,\tabularnewline
\end{tabular} 
\par\end{flushleft}
\end{cor}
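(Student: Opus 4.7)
The plan is to invoke Theorem \ref{GB}, which asserts that $\mathcal{G}$ is a Gr\"obner basis of $\mathcal{K}$ under the specified lexicographic order. By the standard Gr\"obner-basis theorem, this gives $\mathrm{in}(\mathcal{K})=(\mathrm{in}(g):g\in\mathcal{G})$. The corollary is therefore a direct computation: identify the leading monomial of each of the eleven types of generator in $\mathcal{G}$ and verify that the result matches the listed ideals. No extra algebraic ingredient is required.

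The computation splits into three layers of increasing difficulty. The simplest layer is the three families $|X^{1,s_1}_{a_1,\ldots,a_{s_1}}|$, $|Y^{1,s_2}_{b_1,\ldots,b_{s_2}}|$, and $g_{ij,lk}$. Under the term-order convention that a smaller row index dominates and that, within a row, larger columns dominate for $x,y$ while smaller columns dominate for $z$, the lex-largest permutation in each determinant pairs the smallest row with the most-favored column. This immediately produces the diagonal products appearing in $h_X$, $h_Y$, and $h_g$.

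The middle layer consists of $f^{l,k}$, $U$, $V$, $H^{l,k,q}$, and $I^{l,k,q}$, each defined as a signed sum of determinants indexed by some $r$. Inside any fixed summand the greedy diagonal rule still applies; across summands, the one maximizing the number of $z$-rows at the top dominates, since $z>x>y$ and smaller row indices are the more significant positions in the lex order. I would go through each of these five types, isolate the dominant summand, and extract its diagonal product; the index constraints in the statement of the corollary (the chains $q_l<\cdots<q_k<b_{s_1}<\cdots$, $a_p\le q$, and so on) are exactly the conditions under which the chosen summand is well defined and contributes without cancellation.

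The main obstacle will be the recursively defined families $W^{p_1+1,v}$ and ${}^{k,w}I^{l,k,q}$ of Definitions \ref{wp} and \ref{ik}, which have the shape $\Phi^{(v)}=y\,y\,\Phi^{(v-2)}-y\,\Phi^{(v-1)}$. To read off $\mathrm{in}(\Phi^{(v)})$ I would induct on $v$ (respectively $w$); at each step one uses the inequalities $b'_{v-1}\le b_{v+1}$ and $b'_r\le b_{r+2}<b_{r+1}$ built into the definitions to check that the first summand, carrying the extra factor $y_{v-1,b'_{v-1}}y_{v,b_v}$, has a strictly lex-larger leading monomial than the second. Unwinding the recursion down to the already-analyzed base cases $W$ and $I^{l,k,q}$ then accumulates exactly the $y$-factors listed in $h_{W^{p,q,l}}$ and $h_{^{k,w}I^{l,k,q}}$, completing the identification.
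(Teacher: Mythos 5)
Your proposal is correct and follows the same route the paper takes: the corollary is stated as an immediate consequence of Theorem \ref{GB}, so the entire content is reading off the lex-leading monomial of each member of the Gr\"obner basis $\mathcal{G}$, exactly as you outline (greedy diagonal for the plain determinants, dominant summand for the summed families, and induction on the recursion depth for $W^{p_1+1,v}$ and $^{k,w}I^{l,k,q}$). The paper supplies no further argument, so no comparison beyond this is needed.
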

\begin{flushleft}
 \begin{tabular}{>{\centering}p{0.22in}>{\raggedright}p{4.5in}}
(8) & \raggedright{}$h_{V}=(\{z_{lq_{l}}z_{l+1q_{l+1}}...z_{kq_{k}}x_{1a_{1}}...x_{l-1a_{l-1}}y_{1b_{1}}...y_{k-1b_{k-1}}y_{kb_{k}}$\tabularnewline
 & \raggedright{}$y_{k+1b_{k+1}}...y_{s_{1}b_{s_{1}}}|\:1\leq l<k\leq s_{2},\:1\leq q_{l}<q_{l+1}<...<q_{k}<b_{s_{1}}<...<b_{k+2}<b_{k}<b_{k-1}<...<b_{1}<a_{l-1}<...<a_{1}\leq t_{1},\mbox{ }b_{k-1}\leq b_{k+1}\leq t_{2}-k+1\}$,\tabularnewline
\end{tabular} \begin{tabular}{>{\centering}p{0.22in}>{\raggedright}p{4.5in}}
(9) & $h_{V^{l,k,w}}=(\{z_{lq_{l}}...z_{kq_{k}}x_{1a_{1}}...x_{l-1a_{l-1}}y_{1b_{1}}...y_{k+1b_{k+1}}y_{k+1b_{k+1}^{'}}...\mbox{ }y_{wb_{w}}y_{wb_{w}^{'}}$\tabularnewline
 & $y_{w+1b_{w+1}}...y_{s_{1}b_{s_{1}}}|\mbox{ }\:1\leq l\leq k\leq s_{2},\:1\leq p_{l}<...<p_{k}<b_{s_{1}}<...<b_{s_{2}+1}<...<b_{k+1}<b_{k-1}<b_{k}<b_{k-2}<...<b_{1}<a_{l-1}<...<a_{1}\leq t_{1},\: w=k,...,s_{2}-1,\:1\leq b_{s_{2}}<...<b_{w+2}<b_{w}^{'}<b_{w-1}^{'}<...<b_{k}^{'}<b_{k-1}<b_{k-2}...<b_{1}\leq t_{2},\: b_{w-1}^{'}\leq b_{w+1},\: b_{r}^{'}\leq b_{r+2}<b_{r+1},\: r=k,...,l-2\}),$\tabularnewline
\end{tabular} \begin{tabular}{>{\centering}p{0.22in}>{\raggedright}p{4.5in}}
(10) & $h_{H^{l,k,q}}=(\{z_{l-1,q}z_{l,q_{l}}...z_{kq_{k}}x_{1a_{1}}...x_{l-2,a_{l-1}}y_{1b_{1}}...y_{k-1b_{k-1}}y_{k+1b_{k+1}}...$\tabularnewline
 & $y_{s_{1}b_{s_{1}}}|\:1\leq l\leq k\leq s_{2},\:1\leq q\leq n,\:1\leq q_{l}<...<q_{k}<b_{s_{1}}<...<b_{k+1}<b_{k-1}<...<b_{1}<a_{l-2}<...<a_{1}\leq t_{1},\: q_{l}<a_{l-1}\leq q<b_{1}\})$,\tabularnewline
\end{tabular} \begin{tabular}{>{\centering}p{0.22in}>{\raggedright}p{4.5in}}
(11) & $h_{I^{l,k,q}}=(\{z_{l-1,q}z_{l,q_{l}}...z_{kq_{k}}x_{1a_{1}}...x_{l-2,a_{l-1}}y_{1b_{1}}...y_{k-1b_{k-1}}y_{kb_{k}}$\tabularnewline
 & $y_{k+1b_{k+1}}...y_{s_{1}b_{s_{1}}}|\:1\leq l\leq k\leq s_{2},\:1\leq q\leq n,\:1\leq q_{l}<...<q_{k}<b_{s_{1}}<...<b_{k+1}<b_{k-1}<b_{k}<b_{k-2}<...<b_{1}<a_{l-2}<...<a_{1}\leq t_{1},\: q_{l}<a_{l-1}\leq q<b_{1}\})$,\tabularnewline
\end{tabular} \begin{tabular}{>{\centering}p{0.22in}>{\raggedright}p{4.5in}}
(12) & \raggedright{}$h_{^{k,w}I^{l,k,q}}=(\{z_{l-1,q}z_{l,q_{l}}...z_{kq_{k}}x_{1a_{1}}...x_{l-2,a_{l-1}}y_{1b_{1}}...y_{kb_{k}}y_{kb_{k}^{'}}...y_{wb_{w}}y_{wb_{w}^{'}}$\tabularnewline
 & \raggedright{}$y_{w+1b_{w+1}}...y_{s_{1}b_{s_{1}}}|\:1\leq l\leq k\leq s_{2},\:1\leq q\leq n,\:1\leq q_{l}<...<q_{k}<b_{s_{1}}<...<b_{k+1}<b_{k-1}<b_{k}<b_{k-2}<...<b_{1}<a_{l-2}<...<a_{1}\leq t_{1},\: q_{l}<a_{l-1}\leq q<b_{1},\: w=k,...,s_{2}-1,\:1\leq b_{s_{2}}<...<b_{w+2}<b_{w}^{'}<b_{w-1}^{'}<...<b_{k}^{'}<b_{k-1}<b_{k-2}...<b_{1}\leq t_{2},\: b_{w-1}^{'}\leq b_{w+1},\: b_{r}^{'}\leq b_{r+2}<b_{r+1},\: r=k,...,l-2\}).$\tabularnewline
\end{tabular}
\par\end{flushleft}

\medskip{}

\section{ALEXANDER DUAL IDEALS}

From Corollary \ref{ininital}, we see that the initial ideal of $\mathcal{K}$
is generated by square free monomials. We know that an ideal generated
by square free monomials defines a Stanley-Reisner ring. Hence we
can find the Alexander dual ideal of this ideal, {[}B-H{]}. We recall
the definition of Alexander dual ideal.
\begin{defn}
\textcolor{black}{If $I$ is an ideal of $R=k[x_{1},...,x_{n}]$ generated
by square-free monomials $(f_{1},...,f_{l})$, then the Alexander
dual ideal}\textcolor{red}{{} }\textcolor{black}{$I^{*}$ of $I$ is
$\cap_{i}P_{f_{i}}$, where for any square-free monomial $f=x_{i_{1}}\cdots x_{i_{r}}$,
$\:$$P_{f}=(x_{i_{1}},...,x_{i_{r}}).$ }
\end{defn}
From Corollary \ref{ininital}, we see that each summand of the initial
ideal has a similar structure as the ideal in the following lemma.
Hence we find the Alexander dual ideal of this ideal first. 
\begin{lem}
\label{ADX} Let $R=k[X]$, where $X=[x_{ij}]$, $i=1,...,m,\mbox{ }j=1,...,n$
and $m\leq n$. Let $I$ be the ideal generated by $\{x_{1a_{1}}x_{2a_{2}}x_{3a_{3}}...x_{ma_{m}}\}$
with $1\leq a_{1}<a_{2}<...<a_{l}\leq a_{l+1}<...<a_{m}\leq n$ for
some $1\leq l\leq m-1$. Then $I^{*}$, the Alexander dual ideal of
$I$, is generated by \begin{eqnarray*}
\{\prod_{i_{1}=1}^{k_{1}}x_{1i_{1}}\prod_{i_{2}=k_{1}+2}^{k_{2}}x_{2i_{2}}\prod_{i_{3}=k_{2}+2}^{k_{3}}x_{3i_{3}}...\prod_{i_{l}=k_{l-1}+2}^{k_{l}}x_{li_{l}}\\
\prod_{i_{l+1}=k_{l}+1}^{k_{l+1}}x_{l+1i_{l+1}}\prod_{i_{l+2}=k_{l+1}+2}^{k_{l+2}}x_{l+2i_{l+2}}...\prod_{i_{m}=k_{m-1}+2}^{n}x_{mi_{m}}\}\end{eqnarray*}
where $0\leq k_{1}<k_{2}<k_{3}...<k_{l}\leq k_{l+1}<...<k_{m-1}<n$.\end{lem}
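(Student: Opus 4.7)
Because $I$ is a squarefree monomial ideal, $I^{*}=\bigcap_{(a_{1},\ldots,a_{m})}(x_{1,a_{1}},\ldots,x_{m,a_{m}})$, where the intersection runs over the valid sequences indexing the generators of $I$. Hence a squarefree monomial $g$ belongs to $I^{*}$ precisely when, for every valid sequence, some $x_{i,a_{i}}$ divides $g$. Writing $\sigma_{i}(g)=\{j:x_{ij}\mid g\}$ for the row supports, I will prove the lemma in three steps: every monomial displayed in the statement lies in $I^{*}$; every such monomial is a minimal element of $I^{*}$; and every minimal generator of $I^{*}$ arises from the displayed formula.

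For the first step, fix $g=g(k_{1},\ldots,k_{m-1})$ and suppose for contradiction that some valid sequence $(a_{1},\ldots,a_{m})$ satisfies $a_{i}\notin\sigma_{i}(g)$ for every $i$. Using $\sigma_{1}(g)=[1,k_{1}]$ forces $a_{1}\geq k_{1}+1$. An induction on $i$, combining the strict inequality $a_{i-1}<a_{i}$ with $\sigma_{i}(g)=[k_{i-1}+2,k_{i}]$, yields $a_{i}\geq k_{i}+1$ for $i\leq l$. At the flat step, the weak inequality $a_{l}\leq a_{l+1}$ together with $\sigma_{l+1}(g)=[k_{l}+1,k_{l+1}]$ still forces $a_{l+1}\geq k_{l+1}+1$, and the induction continues through $i=m-1$. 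Finally $a_{m-1}<a_{m}$ gives $a_{m}\geq k_{m-1}+2$, contradicting $a_{m}\notin\sigma_{m}(g)=[k_{m-1}+2,n]$ together with $a_{m}\leq n$.

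For the second step, fix any variable $x_{i_{0},j_{0}}$ dividing $g$. I will construct a valid sequence $(a_{1},\ldots,a_{m})$ with $a_{i_{0}}=j_{0}$ and $a_{i'}\notin\sigma_{i'}(g)$ for all $i'\neq i_{0}$, so that $x_{i_{0},j_{0}}$ is the only divisor of $g$ in that sequence, witnessing that $x_{i_{0},j_{0}}$ is not redundant. The recipe is $a_{i'}=k_{i'}+1$ (just past the right end of $\sigma_{i'}(g)$) for $i'<i_{0}$ and $a_{i'}=k_{i'-1}+1$ (just left of $\sigma_{i'}(g)$) for $i'>i_{0}$, with an adjustment at the flat step: $a_{l+1}=k_{l}$ when $i_{0}\leq l$, or $a_{l+1}=k_{l+1}+1$ when $i_{0}\geq l+2$. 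The hypotheses on the $k_{i}$'s translate directly into the required strict/weak inequalities on the constructed sequence, and each $a_{i'}$ lies in the complement of $\sigma_{i'}(g)$ by inspection.

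The third step, that every minimal generator of $I^{*}$ arises from the formula, is the main obstacle. Given a minimal $g\in I^{*}$, I must prove (a) each row support $\sigma_{i}(g)$ is a contiguous interval $[p_{i}+1,q_{i}]$, and (b) consecutive intervals satisfy $p_{i+1}=q_{i}+1$ for $i\neq l$ and $p_{l+1}=q_{l}$; setting $k_{i}=q_{i}$ then identifies $g$ with $g(k_{1},\ldots,k_{m-1})$. Both claims reduce to exchange arguments. If some $\sigma_{i}(g)$ has an interior hole, or if the gap between $\sigma_{i}(g)$ and $\sigma_{i+1}(g)$ exceeds what is prescribed, I exhibit an explicit valid sequence that $g$ fails to cover, contradicting $g\in I^{*}$; conversely, if the gap is smaller than prescribed, or if the intervals overlap away from the flat step, then a boundary variable of $g$ turns out to be unnecessary, contradicting minimality. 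The case analysis is most delicate around the flat step, where the weak inequality $a_{l}\leq a_{l+1}$ permits exchanges that are unavailable at strict steps and accounts for the single anomalous interlocking condition $p_{l+1}=q_{l}$.
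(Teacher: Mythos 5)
Your approach is genuinely different from the paper's: the paper computes $I^{*}=\bigcap(x_{1a_{1}},\ldots,x_{ma_{m}})$ directly, by induction on $m$, peeling off one index at a time ($\bigcap_{a_{m}}$, then $\bigcap_{a_{m-1}}$, etc.) so that both inclusions come out of a single calculation; you instead characterize the minimal generators of $I^{*}$ as the minimal vertex covers of the hypergraph of supports of generators of $I$. Your Steps 1 and 2 are sound (modulo small boundary checks, e.g.\ that the flat-step adjustment $a_{l+1}=k_{l}$ is still $\geq 1$ and compatible with $a_{l}\leq a_{l+1}$, which does work out), and together they prove that every displayed monomial is a minimal generator of $I^{*}$, i.e.\ the inclusion $J\subseteq I^{*}$ together with irredundancy of the displayed generating set.

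The genuine gap is Step 3, which you yourself flag as ``the main obstacle'': as written it is a description of a strategy, not a proof. The entire content of the assertion ``$I^{*}$ \emph{is generated by} these monomials'' (as opposed to ``contains them as minimal generators'') lives in the claims (a) that each row support of a minimal cover is an interval and (b) that consecutive intervals interlock with gap exactly $1$ (resp.\ overlap in one column at the flat step). You assert that both ``reduce to exchange arguments'' and that the flat-step cases are ``delicate,'' but no exchange is actually exhibited, no case is worked, and the boundary behavior (row $1$ must start at column $1$, row $m$ must end at column $n$, and some intermediate supports may legitimately be empty when $k_{i}=k_{i-1}+1$, which complicates the statement ``$p_{i+1}=q_{i}+1$'') is not addressed. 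Until this is carried out, you have not excluded minimal covers of some other shape, so the lemma is not proved. One way to close the gap without the case analysis would be to exploit that Alexander duality is an involution: having shown $J\subseteq I^{*}$, it suffices to show $J^{*}\subseteq I$, i.e.\ that every minimal cover of the staircase hypergraph of $J$ contains the support of some $x_{1a_{1}}\cdots x_{ma_{m}}$; alternatively, follow the paper and compute the iterated intersection inductively, which yields the generating set with no converse step needed.
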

\begin{proof}
Without lost of generality, we may assume $l=1$. Induction on $m$,
we consider $m=2$ and $m=3$ first. When $m=2$, $I=(\{x_{1a_{1}}x_{2a_{2}}|$
$1\leq a_{1}\leq a_{2}\leq n\}$). Now \begin{eqnarray*}
I^{*} & = & \cap_{1\leq a_{1}\leq a_{2}\leq n}(x_{1a_{1}},\, x_{2a_{2}})=\cap_{1\leq a_{1}\leq n}(\cap_{a_{1}\leq a_{2}\leq n}(x_{1a_{1}},\, x_{2a_{2}}))\\
 & = & \cap_{1\leq a_{1}\leq n}(x_{1a_{1}},\prod_{i_{2}=a_{1}}^{n}x_{2i_{2}})=(\{\prod_{i_{1}=1}^{k_{1}}x_{1i_{1}}\prod_{i_{2}=k_{1}+1}^{n}x_{2i_{2}}|\:0\leq k_{1}\leq n\}).\end{eqnarray*}
When $m=3$, we have $I=(\{x_{1a_{1}}x_{2a_{2}}x_{3a_{3}}|$ $1\leq a_{1}\leq a_{2}<a_{3}\leq n\})$
Now \begin{eqnarray*}
I^{*} & = & \cap_{1\leq a_{1}\leq a_{2}<a_{3}<n}(x_{1a_{1}},\, x_{2a_{2}},x_{3a_{3}})\\
 & = & \cap_{1\leq a_{1}<n}(\cap_{a_{1}\leq a_{2}<n}(\cap_{a_{2}<a_{3}\leq n}(x_{1a_{1}},\, x_{2a_{2}},x_{3a_{3}})))\\
 & = & \cap_{1\leq a_{1}<n}(\cap_{a_{1}\leq a_{2}<n}(x_{1a_{1}},x_{2a_{2}},\prod_{i_{3}=a_{2}+2}^{n}x_{3i_{3}})\\
 & = & \cap_{1\leq a_{1}<n}(x_{1a_{1}},\{\prod_{i_{2}=a_{1}}^{k_{2}}x_{2i_{2}}\prod_{i_{3}=k_{2}+2}^{n}x_{3i_{3}}|\: a_{1}-1\leq k_{2}<n\})\\
 & = & (\{\prod_{i_{1}=1}^{k_{1}}x_{1i_{1}}\prod_{i_{2}=k_{1}+1}^{k_{2}}x_{2i_{2}}\prod_{i_{3}=k_{2}+2}^{n}x_{3i_{3}}|\:0\leq k_{1}\leq k_{2}<n\}).\end{eqnarray*}
 When $m>3$, we have \begin{eqnarray*}
I^{*} & = & \cap_{1\leq a_{1}\leq a_{2}<a_{3}<...<a_{m}\leq n}(x_{1a_{1}},\, x_{2a_{2}},...,\, x_{ma_{m}})\\
 & = & \cap_{1\leq a_{1}\leq a_{2}<a_{3}<...<a_{m}\leq n}(\cap_{a_{1}\leq a_{2}<a_{3}<...<a_{m}\leq n}(x_{1a_{1}},\, x_{2a_{2}},...,\, x_{ma_{m}}))\\
 & = & \cap_{1\leq a_{1}\leq a_{2}<a_{3}<...<a_{m}\leq n}(x_{1a_{1}},\{\prod_{i_{2}=a_{1}}^{k_{2}}x_{2i_{2}}\prod_{i_{3}=k_{2}+2}^{k_{3}}x_{3i_{3}}...\prod_{i_{m}=k_{m-1}+2}^{n}x_{mi_{m}}|\\
 &  & \; a_{1}-1\leq k_{2}<k_{3}<...<k_{m-1}<n\})\\
 & = & (\{\prod_{i_{1}=1}^{k_{1}}x_{1i_{1}}\prod_{i_{2}=k_{1}+1}^{k_{2}}x_{2i_{2}}\prod_{i_{3}=k_{2}+2}^{k_{3}}x_{3i_{3}}...\prod_{i_{m}=k_{m-1}+2}^{n}x_{mi_{m}}|\\
 &  & \:1\leq k_{1}\leq k_{2}<k_{3}<...<k_{m-1}<n\}),\end{eqnarray*}
where the third equality comes from the induction.
\end{proof}
We obtain the Alexander dual ideal of $\mbox{in}(\mathcal{L})$. 

\medskip{}

\begin{lem}
\label{ADL}\begin{eqnarray*}
(\mathrm{in}(\mathcal{L}))^{*} & = & (h_{X})^{*}\cap(h_{Y})^{*}\cap(h_{g})^{*}\cap(h_{f})^{*}\cap(h_{U})^{*}\cap(h_{W})^{*}\cap(h_{W^{p,q,v}})^{*}\cap\\
 &  & (h_{V})^{*}\cap(h_{V^{l,k,w}})^{*}\cap(h_{H})^{*}\cap(h_{I})^{*}\cap(h_{^{k,w}I^{l,k,q}})^{*}\end{eqnarray*}
 where\linebreak{}
\begin{tabular}{c>{\raggedright}p{5.6in}}
$\mathrm{(1)}$ & \raggedright{}\[
(h_{X})^{*}=(\{\prod_{a_{1}=A_{2}+2}^{t_{1}}x_{1a_{1}}\prod_{a_{2}=A_{3}+2}^{A_{2}}x_{2a_{2}}...\prod_{a_{s_{1}}=1}^{A_{s_{1}}}x_{s_{1}a_{s_{1}}}|\:0\leq A_{s_{1}}<...<A_{2}<t_{1}\}),\qquad\qquad\qquad\qquad\qquad\qquad\qquad\qquad\]
 \tabularnewline
\end{tabular} \begin{tabular}{c>{\raggedright}p{5.6in}}
$(2)$ & \[
(h_{Y})^{*}=(\{\prod_{b_{1}=B_{2}+2}^{t_{2}}y_{1b_{1}}\prod_{b_{2}=B_{3}+2}^{B_{2}}y_{2b_{2}}...\prod_{b_{s_{2}}=1}^{B_{s_{2}}}y_{s_{2}b_{s_{2}}}|\:0\leq B_{s_{2}}<...<B_{2}<t_{2}\}),\qquad\qquad\qquad\qquad\qquad\qquad\qquad\qquad\qquad\qquad\qquad\]
\tabularnewline
$(3)$ & \raggedright{}\[
(h_{g})^{*}=(\{\prod_{(i,j)=(1,1)}^{(u_{1},u_{2})}z_{ij}\prod_{(l,k)=(u_{1},u_{2})\uplus2}^{(m,n)}x_{lk}|\:(0,0)\leq(u_{1},u_{2})<(m,n)\}),\;\qquad\qquad\qquad\qquad\qquad\qquad\qquad\qquad\]
\tabularnewline
\end{tabular} \begin{tabular}{l>{\raggedright}p{5.6in}}
$(4)$ & \raggedright{}\[
(h_{f})^{*}=\bigcap_{l=1}^{s_{2}}\bigcap_{k=l}^{s_{2}}(\{\prod_{q_{l}=1}^{Q_{l}}z_{l,q_{l}}..\mbox{.}\prod_{q_{k}=Q_{k-1}+2}^{Q_{k}}z_{kq_{k}}\prod_{a_{1}=A_{2}+2}^{t_{1}}x_{1a_{1}}...\prod_{a_{l-1}=B_{1}+2}^{A_{l-1}}x_{l-1a_{l-1}}\qquad\qquad\qquad\qquad\qquad\qquad\qquad\qquad\qquad\]
\[
\qquad\prod_{b_{1}=B_{2}+2}^{B_{1}}y_{1b_{1}}...\prod_{b_{k-1}=B_{k+1}+2}^{B_{k-1}}y_{k-1b_{k-1}}\prod_{b_{k+1}=B_{k+2}+2}^{B_{k+1}}y_{k+1b_{k+1}}...\qquad\qquad\qquad\qquad\qquad\qquad\]
\[
\qquad\prod_{b_{s_{1}}=Q_{k}+2}^{B_{s_{1}}}y_{s_{1}b_{s_{1}}}|\:0\leq Q_{l}<...<Q_{k}<B_{s_{1}}<...<B_{k+1}<B_{k-1}<...\qquad\qquad\qquad\qquad\qquad\qquad\]
\[
\qquad\qquad\qquad\qquad\qquad\qquad\qquad\qquad<B_{1}<A_{l-1}<...<A_{1}<t_{1}\}),\qquad\qquad\qquad\qquad\qquad\qquad\]
\tabularnewline
\end{tabular} \begin{tabular}{>{\centering}p{0.18in}>{\raggedright}p{5.6in}}
$(5)$ & \[
(h_{U})^{*}=\bigcap_{p=1}^{s_{1}-1}(\{\prod_{q_{p}=A_{p}+1}^{n}z_{p,q_{p}}\prod_{a_{1}=A_{2}+2}^{t_{1}}x_{1a_{1}}...\prod_{\mbox{min}(A_{p},A_{p+1})+2}^{A_{p-1}}x_{p-1,a_{p-1}}\qquad\qquad\qquad\qquad\]
\[
\qquad\prod_{a_{p}=1}^{A_{p}}x_{pa_{p}}\prod_{a_{p+1}=A_{p+2}+2}^{A_{p+1}}y_{p+1a_{p+1}}...\prod_{a_{s_{1}}=1}^{A_{s_{1}}}y_{s_{1}a_{s_{1}}}|0\leq A_{s_{1}}<...<A_{p+1}<\qquad\qquad\qquad\qquad\qquad\]
\[
A_{p-1}<...<A_{2}<t_{1},0\leq A_{p}<A_{p-1}\}),\qquad\qquad\qquad\qquad\]
\tabularnewline
\end{tabular} \begin{tabular}{>{\centering}p{0.15in}>{\centering}p{5.6in}}
$(6)$ & \[
(h_{W})^{*}=\bigcap_{p=1}^{s_{1}-1}\bigcap_{i=1}^{p}(\{\prod_{q=A_{p}+1}^{n}z_{pq}\prod_{a_{1}=A_{2}+2}^{t_{1}}x_{1a_{1}}...\prod_{a_{p-1}=B_{i+1}+2}^{A_{p-1}}x_{p-1,a_{p-1}}\prod_{a_{p}=1}^{A_{p}}x_{pa_{p}}\qquad\qquad\qquad\qquad\qquad\qquad\]
\[
\prod_{b_{1}=B_{2}}^{t_{2}}y_{1b_{1}}...\prod_{b_{p-1}=B_{p}+2}^{B_{p-1}}y_{p-1b_{p-1}}\prod_{b_{p}=B_{p+2}+2}^{B_{p}}y_{pb_{p}}\prod_{b_{p+1}=A_{p-1}+1}^{B_{p+1}}y_{p+1b_{p+1}}...\qquad\qquad\]
\[
\prod_{1}^{B_{S_{1}}}y_{s_{1}b_{s_{1}}}|\:0\leq B_{s_{1}}<...<B_{s_{2}+1}<A_{p}<...<A_{2}<t_{1},\:1\leq B_{s_{1}}<B_{s_{1}-1}\qquad\qquad\]
\[
<...<B_{p+2}<B_{p}<B_{p-1}<...<B_{i+1}<B_{p+1}<B_{i}<...<B_{2}<t_{2},\:\qquad\qquad\]
\[
1\leq B_{s_{1}}<...<B_{p+2}<B_{p}<...<B_{i+1}<A_{p-1}<...<A_{2}<t_{1}\}),\]
\tabularnewline
\end{tabular} \begin{tabular}{>{\centering}p{0.17in}>{\raggedright}p{5.6in}}
$(7)$ & \raggedright{}\[
(h_{W^{p,q,v}})^{*}=\bigcap_{p=1}^{s_{2}-1}\bigcap_{v=p+1}^{s_{2}-1}\bigcap_{i=1}^{p}(\{\prod_{q_{p}=A_{p}+1}^{s_{1}}z_{pq_{p}}\prod_{a_{1}=A_{2}+2}^{A_{1}}x_{1a_{1}}...\prod_{a_{p}=B_{p+1}+2}^{A_{p}}x_{pa_{p}}\qquad\qquad\qquad\qquad\qquad\qquad\]
\[
\prod_{b_{1}=B_{2}+2}^{t_{2}}y_{1b_{1}}...\prod_{b_{p-1}=B_{p+1}^{'}+2}^{B_{p-1}}y_{p-1b_{p-1}}\prod_{b_{p}=A_{p-1}+1}^{B_{p}}y_{pb_{p}}\prod_{b_{p+1}=A_{p-1}+1}^{B_{p+1}}y_{p+1b_{p+1}}\qquad\qquad\qquad\qquad\qquad\qquad\]
\[
\prod_{b_{p+1}^{'}=B_{p+2}^{'}+2}^{B_{p+1}^{'}}y_{p+1b_{p+1}^{'}}\prod_{b_{p+2}=B_{p+3}+2}^{B_{p+2}}y_{p+2b_{p+2}}\prod_{b_{p+2}^{'}=B_{p+3}^{'}+2}^{B_{p+2}^{'}}y_{p+2b_{p+2}^{'}}...\qquad\qquad\qquad\qquad\qquad\qquad\]
\[
\prod_{b_{v}=\mbox{min}(B_{v+1}+2,B_{v-2}^{'}+1)}^{B_{v}}y_{vb_{v}}\prod_{b_{v}^{'}=B_{v+2}+2}^{B_{v}^{'}}y_{vb_{v}^{'}}\prod_{b_{v+1}=B_{v-1}^{'}+1}^{B_{v+1}}y_{v+1b_{v+1}}...\qquad\qquad\qquad\qquad\qquad\qquad\]
\[
\prod_{b_{s_{1}}=A_{s_{2}+1}+2}y_{s_{1}b_{s_{1}}}|\:0\leq B_{s_{1}}<...<B_{s_{2}+1}<A_{p}<...<A_{2}<t_{1},\: B_{s_{2}+1}<\qquad\qquad\qquad\qquad\qquad\qquad\]
\[
B_{s_{2}}<...<B_{v+2}<B_{v}^{'}<...<B_{p+1}^{'}<B_{p}<...<B_{i+1}<A_{p-1}\leq B_{p+1}\qquad\qquad\qquad\qquad\qquad\qquad\]
\[
<B_{i}<...<B_{2}<t_{2},\: B_{s_{1}}<....<B_{s_{2}+1}<B_{s_{2}}<...<B_{v+2}<B_{v+1}<\qquad\qquad\qquad\qquad\qquad\qquad\]
\[
B_{v}<B_{v-1}<...<B_{p+2}<A_{p}<A_{p-1}\leq B_{p+1},\: B_{r}^{'}\leq B_{r+2},\: r=p,...,v-2\}),\qquad\qquad\qquad\qquad\qquad\qquad\]
\tabularnewline
\end{tabular} \begin{tabular}{c>{\centering}p{5.6in}}
$(8)$ & \[
(h_{V})^{*}=\bigcap_{l=1}^{s_{1}-1}\bigcap_{k=l}^{s_{1}}(\{\prod_{q_{l}=1}^{Q_{l}}z_{lq_{l}}...\prod_{q_{k}=Q_{k-1}+2}^{Q_{k}}z_{kq_{k}}\prod_{a_{1}=A_{2}+2}^{t_{1}}x_{1a_{1}}...\prod_{a_{l-1}=B_{1}+2}^{A_{l-1}}x_{l-1a_{l-1}}\qquad\qquad\qquad\qquad\qquad\qquad\]
\[
\prod_{b_{1}=B_{2}+2}^{B_{1}}y_{1b_{1}}...\prod_{b_{k-1}=B_{k+1}+2}^{B_{k-1}}y_{k-1b_{k-1}}\prod_{b_{k}=B_{k-1}+2}^{B_{k}}y_{kb_{k}}\prod_{b_{k+1}=B_{k+2}+1}^{B_{k+1}}y_{k+1b_{k+1}}...\qquad\qquad\qquad\qquad\qquad\qquad\]
\[
\prod_{b_{s_{1}}=Q_{k}+2}^{B_{s_{1}}}y_{s_{1}b_{s_{1}}}|\:0\leq Q_{l}<Q_{l+1}<...<Q_{k}<B_{s_{1}}<...<B_{k+2}<B_{k}<\qquad\qquad\qquad\qquad\qquad\qquad\]
\[
B_{k-1}<...<B_{1}<A_{l-1}<...<A_{2}<t_{1},\, B_{k-1}\leq t_{2}-k+1\}),\qquad\qquad\qquad\qquad\qquad\qquad\]
\tabularnewline
\end{tabular} \begin{tabular}{c>{\raggedright}p{5.6in}}
$(9)$ & \[
(h_{V^{l,k,w}})^{*}=\bigcap_{l=1}^{s_{2}-1}\bigcap_{k=l}^{s_{2}}\bigcap_{w=k}^{s_{1}}(\{\prod_{q_{l}=1}^{Q_{l}}z_{lq_{l}}...\prod_{q_{k}=Q_{k-1}+2}^{Q_{k}}z_{kq_{k}}\prod_{a_{1}=A_{2}+2}^{t_{1}}x_{1a_{1}}...\qquad\qquad\qquad\qquad\qquad\qquad\]
\[
\prod_{a_{l-1}=B_{1}+2}^{A_{l-1}}x_{l-1a_{l-1}}\prod_{b_{1}=B_{2}+2}^{B_{1}}y_{1b_{1}}...\prod_{b_{k-1}=B_{k}+2}^{B_{k-1}}y_{k-1b_{k-1}}\prod_{b_{k}=B_{k+1}^{'}+2}^{B_{k}}y_{kb_{k}}\qquad\qquad\qquad\qquad\qquad\qquad\]
\[
\prod_{b_{k+1}=\mbox{min}(B_{k+2}+2,B_{k-1}+1)}^{t_{2}-k+1}y_{k+1b_{k+1}}\prod_{b_{k+1}^{'}=B_{k+2}^{'}+2}^{B_{k+1}^{'}}y_{k+1b_{k+2}^{'}}...\qquad\qquad\qquad\qquad\qquad\qquad\]
\[
\prod_{b_{w-1}=\mbox{min}(B_{w-2}^{'},B_{w})+2}^{B_{w-1}}y_{w-1b_{w-1}}\prod_{b_{w-1}^{'}=B_{w+1}+2}^{B_{w-1}^{'}}y_{w-1b_{w-1}^{'}}\prod_{b_{w}=B_{w+1}+2}^{B_{w}}y_{wb_{w}}\qquad\qquad\qquad\qquad\qquad\qquad\]
\[
\prod_{b_{w}^{'}=B_{w+2}+2}^{B_{w}^{'}}y_{wb_{w}^{'}}\prod_{b_{w+1}=B_{w}^{2}+2}^{B_{w+1}}y_{w+1b_{w+1}}\prod_{b_{w+2}=B_{w+3}+2}^{B_{w+2}}y_{w+2b_{w+2}}...\prod_{b_{s_{1}}=Q_{k}+2}^{B_{s_{1}}}y_{s_{1}b_{s_{1}}}|\:\qquad\qquad\qquad\qquad\qquad\qquad\]
\[
0\leq Q_{l}<Q_{l+1}<...<Q_{k}<B_{s_{1}}<...<B_{t+2}<B_{t}^{2}<B_{t-1}^{2}<...<B_{k+1}^{2}\qquad\qquad\qquad\qquad\qquad\qquad\]
\[
<B_{k}<B_{k-1}<...<B_{1}<A_{l-1}<...<A_{2}<t_{1},\: B_{w-1}^{2}\leq B_{w+1},\mbox{ }\:\:\:\qquad\qquad\qquad\qquad\qquad\qquad\]
\[
\: B_{r}^{2}\leq B_{r+1}^{1}<B_{r}^{1},\: r=k+1,...,w-2,\:0\leq Q_{p+1}<Q_{p+2}<....<Q_{k}<B_{s_{1}}\qquad\qquad\qquad\qquad\qquad\qquad\]
\[
<...<\: B_{k+1}<B_{k-1}<...<B_{1}<A_{p-1}<A_{p-2}<...<A_{1}<t_{1}\}),\qquad\qquad\qquad\]
\tabularnewline
\end{tabular} \begin{tabular}{>{\centering}p{0.27in}>{\raggedright}p{5.6in}}
$(\mathrm{1}0)$ & \raggedright{}\[
(h_{H^{l,k,q}})^{*}=\bigcap_{l=1}^{s_{2}}\bigcap_{k=l}^{s_{2}}(\{\prod_{q=A_{l-1}+1}^{Q}z_{l-1,q}\prod_{q_{l}=1}^{Q_{l}}z_{l,q_{l}}...\prod_{q_{k}=Q_{k-1}+2}^{Q_{k}}z_{kq_{k}}\prod_{a_{1}=A_{2}+2}^{t_{1}}x_{1a_{1}}...\qquad\qquad\qquad\qquad\qquad\qquad\]
\[
\prod_{a_{l-1}=B_{1}+2}^{A_{l-1}}x_{l-2,a_{l-1}}\prod_{b_{1}=\mbox{min}(B_{2},Q)+2}^{B_{1}}y_{1b_{1}}...\prod_{b_{k-1}=B_{k+1}+2}^{B_{k-1}}y_{k-1b_{k-1}}\qquad\qquad\qquad\qquad\qquad\qquad\]
\[
\prod_{b_{k+1}=B_{k+2}+2}^{B_{k+1}}y_{k+1b_{k+1}}...\prod_{b_{s_{1}}=Q_{k}+2}^{B_{s_{1}}}y_{s_{1}b_{s_{1}}}|\:1\leq Q_{l}<...<Q_{k}<B_{s_{1}}<...<\qquad\qquad\qquad\qquad\qquad\qquad\]
$B_{k+1}<B_{k-1}<...<B_{1}<A_{l-2}<...<A_{2}<t_{1},\: Q_{l}<A_{l-1}\leq Q<B_{1}\}),\qquad\qquad\qquad\qquad\qquad\qquad$\tabularnewline
\end{tabular} \begin{tabular}{c>{\centering}p{5.6in}}
$(\mathrm{1}1)$ & \[
(h_{I^{l,k,q}})^{*}=\bigcap_{l=1}^{s_{2}}\bigcap_{k=l}^{s_{2}}(\{\prod_{q=A_{l-1}+1}^{Q}z_{l-1,q}\prod_{q_{l}=1}^{Q_{l}}z_{l,q_{l}}...\prod_{Q_{k}=Q_{k-1}+2}^{Q_{k}}z_{kq_{k}}\prod_{a_{1}=A_{2}+2}^{A_{1}}x_{1a_{1}}...\qquad\qquad\qquad\qquad\qquad\qquad\]
\[
\prod_{a_{l-2}=B_{1}+2}^{A_{l-2}}x_{l-2,a_{l-1}}\prod_{b_{1}=\mbox{min}(B_{2},Q)+2}^{B_{1}}y_{1b_{1}}...\prod_{b_{k-1}=B_{k+1}+2}^{B_{k-1}}y_{k-1b_{k-1}}\prod_{b_{k}=B_{k-1}+2}^{B_{k}}y_{kb_{k}}\qquad\qquad\qquad\qquad\qquad\qquad\]
\[
\prod_{b_{k+1}=B_{k+2}+2}^{B_{k+1}}y_{k+1b_{k+1}}...\prod_{b_{s_{1}}=Q_{k}+2}^{B_{s_{1}}}y_{s_{1}b_{s_{1}}}|\:\:1\leq Q_{l}<...<Q_{k}<B_{s_{1}}<...<\qquad\qquad\qquad\qquad\qquad\qquad\]
\[
B_{k+1}<...<B_{k+1}<B_{k-1}<B_{k}<B_{k-2}<...<B_{1}<A_{l-2}<...<A_{2}<t_{1},\quad\qquad\qquad\qquad\]
\[
Q_{l}<A_{l-1}\leq Q<B_{1}\}),\]
\tabularnewline
\end{tabular} \begin{tabular}{l>{\raggedright}p{5.6in}}
$(\mathrm{1}2)$ & \[
(h_{^{k,w}I^{l,k,q}})^{*}=\bigcap_{l=1}^{s_{2}}\bigcap_{k=l}^{s_{2}}\bigcap_{w=k}^{s_{2}}(\{\prod_{q=A_{l-1}+1}^{Q}z_{l-1,q}\prod_{q_{l}=1}^{Q_{l}}z_{l,q_{l}}...\prod_{q_{k}=Q_{k-1}+2}^{Q_{k}}z_{kq_{k}}\qquad\qquad\qquad\qquad\qquad\qquad\]
\[
\prod_{a_{1}=A_{2}+2}^{t_{1}}x_{1a_{1}}...\prod_{a_{l-2}=B_{1}+2}^{A_{l-2}}x_{l-2,a_{l-1}}\prod_{b_{1}=\mbox{min}(Q,B_{2})+2}^{B_{1}}y_{1b_{1}}...\qquad\qquad\qquad\qquad\qquad\qquad\]
\[
\prod_{b_{k-1}=B_{k+1}+2}^{B_{k-1}}y_{k-1b_{k-1}}\prod_{b_{k}=B_{k-1}+2}^{B_{k}}y_{kb_{k}}\prod_{b_{k}^{'}=B_{k+1}^{'}+2}^{B_{k}^{'}}y_{kb_{k}^{'}}...\prod_{b_{w}=B_{w+1}+2}^{B_{w}}y_{wb_{w}}\qquad\qquad\qquad\qquad\qquad\qquad\]
\[
\prod_{b_{w}^{'}=B_{w+2}+2}^{B_{w}^{'}}y_{wb_{w}^{'}}...\prod_{b_{w+1}=B_{w-1}^{'}+1}^{B_{w+1}}y_{w+1b_{w+1}}\prod_{b_{s_{1}}=Q_{k}+2}^{B_{s_{1}}}y_{s_{1}b_{s_{1}}}|\:\:1\leq Q_{l}<...<Q_{k}<\qquad\qquad\qquad\qquad\qquad\qquad\]
\[
B_{s_{1}}<...<B_{k+1}<B_{k-1}<B_{k}<B_{k-2}<...<B_{1}<A_{l-2}<...<A_{2}<t_{1},\qquad\qquad\qquad\qquad\qquad\qquad\]
\[
\: Q_{l}<A_{l-1}\leq Q<B_{1},\:1\leq B_{s_{2}}<...<B_{w+2}<B_{w}^{'}<B_{w-1}^{'}<...<B_{k}^{'}<\qquad\qquad\qquad\qquad\qquad\qquad\]
\[
B_{k-1}<B_{k-2}...<B_{2}<t_{2},\: B_{w-1}^{'}\leq B_{w+1},\: B_{r}^{'}\leq B_{r+2}<B_{r+1},\: k\leq r\leq l-2\}).\qquad\qquad\qquad\qquad\qquad\qquad\]
\tabularnewline
\end{tabular}\end{lem}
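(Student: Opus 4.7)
The plan has two stages. The first is to reduce to the intersection formula via the elementary identity $(I+J)^{*} = I^{*} \cap J^{*}$ for square-free monomial ideals. This is immediate from the definition $I^{*} = \bigcap_{i} P_{f_{i}}$: the set of generators of a sum is the union of the generators of the summands, so the intersection of the associated prime components is taken over that union. Applying this eleven times to the decomposition in Corollary \ref{ininital} gives the asserted intersection expression for $(\mathrm{in}(\mathcal{K}))^{*}$.

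The second stage is to compute each of the twelve Alexander duals $(h_{*})^{*}$ individually using Lemma \ref{ADX}. The ideals $h_{X}$ and $h_{Y}$ are literally of the form in Lemma \ref{ADX} (with $l = 1$), so $(h_{X})^{*}$ and $(h_{Y})^{*}$ follow at once. The ideal $h_{g}$ is handled by a direct computation matching the form in item $(3)$. For each of the remaining nine ideals $h_{f}, h_{U}, h_{W}, h_{W^{p,q,l}}, h_{V}, h_{V^{l,k,w}}, h_{H^{l,k,q}}, h_{I^{l,k,q}}, h_{^{k,w}I^{l,k,q}}$, the generating monomials split into blocks in the $z$-, $x$-, and $y$-variables, where each block has its own strict ordering together with a handful of cross-constraints. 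After fixing the external indexing parameters ($l, k, p, q, v, i, w$, etc.) the variables within a single generator assemble into an instance of Lemma \ref{ADX} (or its straightforward multi-block generalization obtained by the same induction on the number of rows), and the external indexing produces the outer $\bigcap$'s in the stated formula.

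The principal obstacle is bookkeeping. For each of the twelve components one must systematically translate each strict inequality $a_{i} < a_{i-1}$ in the generators into the dual range $[A_{i}+2, A_{i-1}]$, and each weak inequality $a_{i} \leq a_{i-1}$ into $[A_{i}+1, A_{i-1}]$, exactly as in the proof of Lemma \ref{ADX}, and then track how the cross-constraints between the $x$-, $y$-, and $z$-blocks (such as $a_{l-1} \leq b_{p+1}$, $a_{p} \leq q$, or $Q_{l} < a_{l-1} \leq Q$) propagate into couplings between the product ranges on adjacent variable types in the dual generators. Lower bounds of the form $\mathrm{min}(B_{i+1}+2, B_{j}^{'}+1)$ that appear in the stated formula arise precisely from intersecting two independent constraints of this kind. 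Since this is the pattern of Lemma \ref{ADX} repeated, the verification for each component is structurally the same computation, and no new idea is required beyond the iterated application of that lemma.
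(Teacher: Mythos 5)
Your proposal is correct and matches the paper's approach: the paper's entire proof is the single sentence ``This follows from Lemma \ref{ADX},'' which implicitly contains exactly the two steps you make explicit, namely the identity $(I+J)^{*}=I^{*}\cap J^{*}$ for square-free monomial ideals applied to the decomposition of $\mathrm{in}(\mathcal{K})$ in Corollary \ref{ininital}, followed by component-by-component application of Lemma \ref{ADX}. Your elaboration of the bookkeeping (strict versus weak inequalities producing the $+2$ versus $+1$ shifts in the dual ranges, and cross-constraints producing the $\min$ bounds) is a faithful unpacking of what the paper leaves to the reader.
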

\begin{proof}
This follows from Lemma \ref{ADX}.
\end{proof}
Having the Alexander dual ideal of $\mbox{in}(\mathcal{L})$, we can
use Theorem \ref{ER} to show that $\mbox{in}(\mathcal{L})$ is Cohen-Macaulay
once we show that the Alexander dual ideal has a linear free resolution.
We recall the definition of a linear free solution and the regularity
of an ideal.
\begin{defn}
\textcolor{black}{(a) Let \[
\mathbf{F}:...\longrightarrow F_{i}\longrightarrow F_{i-1}\longrightarrow...\longrightarrow F_{0}\]
be a minimal homogeneous free resolution of an ideal $I$ in a ring
$R=k[x_{1},...,x_{n}]$ with $F_{i}=\oplus_{j}R(-a_{ij})$. We say
$I$ has a linear free resolution if $a_{ij}=a_{i}$ and $a_{i+1}=a_{i}+1$.}
\end{defn}
\textcolor{black}{(b) The regularity of $I$ is defined as $\mbox{reg}(I)=\mbox{\mbox{max}}_{i,j}\{a_{ij}-i\}.$}

\medskip{}

\begin{fact}
\textcolor{black}{\label{linearAndreg} If all the minimal homogeneous
generators of $I$ have the same degree, $d$, then $I$ has a linear
free resolution if and only if $\mathrm{reg}$$(I)=d$.}
\end{fact}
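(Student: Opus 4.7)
The plan is to prove both directions directly from the definitions of linear free resolution and regularity, using the hypothesis that all minimal homogeneous generators of $I$ share degree $d$ to pin down $a_{0j} = d$ for every $j$.

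For the forward implication, if $I$ has a linear free resolution, then by definition $a_{ij} = a_i$ and $a_{i+1} = a_i + 1$. Combined with $a_0 = d$, this gives $a_{ij} = d + i$ for all $i,j$. Hence $a_{ij} - i = d$ for every $i,j$, and $\mathrm{reg}(I) = \max_{i,j}\{a_{ij} - i\} = d$.

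For the reverse implication, assume $\mathrm{reg}(I) = d$. I would prove by induction on $i$ that $a_{ij} = d + i$ for all $j$. The case $i = 0$ is the hypothesis on the generators. For the inductive step, $\mathrm{reg}(I) = d$ forces $a_{ij} \leq d + i$. For the reverse inequality I would invoke minimality of the resolution: the entries of the differential matrices lie in the homogeneous maximal ideal of $R$, so each basis element at homological degree $i$ must map to a combination involving at least one basis element of strictly smaller shift. This produces an index $k$ with $a_{ij} \geq a_{i-1,k} + 1$, and the induction hypothesis $a_{i-1,k} = d + i - 1$ gives $a_{ij} \geq d + i$. Equality follows, so the $a_{ij}$ are all equal to the single value $a_i := d + i$, which exhibits the resolution as linear.

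The only subtle point, which I would treat as the main obstacle, is the minimality argument yielding $a_{ij} \geq a_{i-1,k} + 1$; everything else is bookkeeping with the definitions. Since the resolution is taken to be minimal and homogeneous over the standard graded polynomial ring $R$, this inequality is standard and follows from the fact that no unit entries appear in the differentials of a minimal graded free resolution, so each generator at level $i$ is mapped to an element whose degree strictly exceeds the degrees of the generators at level $i-1$ that appear in its image.
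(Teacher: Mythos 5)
Your proof is correct. The paper states this as a Fact without any proof (it is a standard consequence of the definitions, implicitly cited as well known), so there is nothing to compare against; your argument --- the trivial forward direction plus the reverse direction via induction on homological degree, using minimality of the resolution to force $a_{ij}\geq a_{i-1,k}+1$ and the bound $a_{ij}-i\leq\mathrm{reg}(I)=d$ to force equality --- is exactly the standard justification one would supply.
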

We will show that $(\mbox{in}(\mathcal{L}))^{*}$ is generated in
the same degree $d$ and $\mbox{reg}(\mbox{in}(\mathcal{L}))^{*}=d$.
Before that we show the following result first. The reason we show
this is that we will need the technique of the proof for the case
$(\mbox{in}(\mathcal{L}))^{*}$.
\begin{lem}
\label{REGX} Let $R=k[X]$, where $X=[x_{ij}]$, $i=1,...,m,\mbox{ }j=1,...,n$.
Let $I$ be the ideal generated by $\{x_{1a_{1}}x_{2a_{2}}x_{3a_{3}}...x_{ma_{m}}\}$
with $1\leq a_{1}<a_{2}<...<a_{l}\leq a_{l+1}<...<a_{m}<n$ for some
$1\leq l\leq m-1$. Then $I^{*}$, the Alexander dual ideal of $I$
has a linear free resolution.\end{lem}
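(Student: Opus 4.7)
The plan is to apply Fact~\ref{linearAndreg}, which reduces the linear-resolution claim to two assertions: every minimal generator of $I^{*}$ has the same degree $d$, and $\mathrm{reg}(I^{*})=d$. For the first, I use the explicit description of generators from Lemma~\ref{ADX}: a telescoping count of the product ranges shows that a generator
\[
\prod_{i_{1}=1}^{k_{1}}x_{1i_{1}}\prod_{i_{2}=k_{1}+2}^{k_{2}}x_{2i_{2}}\cdots\prod_{i_{l}=k_{l-1}+2}^{k_{l}}x_{li_{l}}\prod_{i_{l+1}=k_{l}+1}^{k_{l+1}}x_{l+1,i_{l+1}}\cdots\prod_{i_{m}=k_{m-1}+2}^{n}x_{mi_{m}}
\]
has degree $d=n-m+2$, independent of the choice of $(k_{1},\dots,k_{m-1})$. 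Since $I^{*}$ is equigenerated in degree $d$, the inequality $\mathrm{reg}(I^{*})\ge d$ is automatic, and only the upper bound $\mathrm{reg}(I^{*})\le d$ remains.

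To establish the upper bound, I plan to induct on $m$, using the standard short exact sequence
\[
0\longrightarrow R/(I^{*}:f)(-\deg f)\longrightarrow R/I^{*}\longrightarrow R/(I^{*},f)\longrightarrow 0
\]
together with the inequality $\mathrm{reg}(I^{*})\le\max\{\mathrm{reg}(I^{*}:f)+\deg f,\ \mathrm{reg}(I^{*},f)\}$. I would choose $f$ to be an extremal generator of $I^{*}$ adapted to the ``regular'' part of the index pattern, and then identify both $I^{*}:f$ and $(I^{*},f)$ as ideals of the same type treated by Lemma~\ref{ADX} but over a smaller matrix or with smaller parameters $(m',n',l')$. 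The base case $m=2$ reduces to the ideal $(x_{2,1}\cdots x_{2,n},\,x_{1,1}x_{2,2}\cdots x_{2,n},\,\ldots,\,x_{1,1}\cdots x_{1,n-1}x_{2,n})$, which has linear quotients under the displayed ordering and hence a linear resolution.

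The main obstacle will be the asymmetry at the boundary index $l$: the lower bounds in the product ranges are $k_{i-1}+2$ for $i\ne l+1$ but $k_{l}+1$ at $i=l+1$, reflecting the weak inequality $a_{l}\le a_{l+1}$ in the generators of $I$. This disrupts a uniform row-by-row reduction and forces a split into the cases $l>1$ and $l=1$, with $f$ chosen adapted to the regular rows of the matrix in each case; verifying that the colon and sum ideals are again of the type of Lemma~\ref{ADX} (possibly after relabeling) is the step that needs the most care. Once this bookkeeping is controlled and the inductive hypothesis applies, Fact~\ref{linearAndreg} closes the argument. An added benefit of this method is that essentially the same induction, with considerably more elaborate bookkeeping, will carry over to the larger ideal $(\mathrm{in}(\mathcal{L}))^{*}$ from Lemma~\ref{ADL}, which is precisely why the author signals that ``we will need the technique of the proof'' here.
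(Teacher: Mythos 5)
Your opening moves match the paper's: the degree count $d=n-(m-2)$ and the reduction via Fact~\ref{linearAndreg} to showing $\mathrm{reg}(I^{*})=d$ are exactly how the paper's proof begins. But the core of your argument is deferred rather than carried out, and the induction you propose does not close numerically. First, the deferral: you acknowledge that identifying $(I^{*}:f)$ and $(I^{*},f)$ as ideals of the type of Lemma~\ref{ADX} is ``the step that needs the most care,'' but that identification \emph{is} the proof; nothing is established until it is done. (Note also that if $f$ is literally a generator of $I^{*}$, as you say, then $(I^{*},f)=I^{*}$ and your short exact sequence is vacuous; you presumably intend $f$ to be a variable such as $x_{mn}$.) Second, and more seriously, the degree bookkeeping obstructs an induction on $m$ with $n$ fixed: the target regularity $d=n-m+2$ \emph{increases} as $m$ decreases, so if $(I^{*}:f)$ were again of the same type over an $(m-1)\times n$ matrix its regularity would be $d+1$, and the bound $\mathrm{reg}(I^{*})\leq\max\{\mathrm{reg}(I^{*}:f)+\deg f,\ \mathrm{reg}(I^{*},f)\}$ yields at best $d+1+\deg f>d$. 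For the estimate to close, the colon ideal must lose columns, not rows.

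This is precisely why the paper inducts on $n$. It splits the generators of $I_{n}^{*}$ according to divisibility by the single variable $x_{m,n}$ (the families $A$, $B$, $C$ in the proof), obtaining a two-step filtration $I_{n}^{*}\subset(C'x_{m,n},A')\subset(C',A')=I_{n-1}^{*}$ in which each quotient has the form $P/PL$ with $L$ generated by variables forming a regular sequence modulo $P$, so that $\mathrm{reg}(P/PL)$ equals the generating degree of $P$; the degree-one twist coming from $x_{m,n}$ then exactly matches the drop $d\mapsto d-1$ in passing from $n$ to $n-1$. The key technical computation, which your sketch has no counterpart for, is the identity $(A')\cap(C')=(A'x_{m,n-1})$ that makes the quotients collapse to the form $P/PL$. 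The base case is handled by a separate explicit filtration when $m=n$, not by your $m=2$ linear-quotients observation (which is correct but is the base of the wrong induction). To repair the proposal you would need to switch the induction to $n$ and essentially reconstruct the paper's intersection computations, or else verify that your colon ideals genuinely drop $n$ by enough to absorb the degree shift --- which the sketch does not do.
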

\begin{proof}
From Lemma \ref{ADX}, we see that $I^{*}$ is generated by elements
of degree $n-(m-2)$, denoted by $\mbox{d}(I^{*})$. Using Fact \ref{linearAndreg},
it's sufficient to show that reg($I^{*})=\mbox{d}(I^{*})=n-(m-2)$.
We will induct on $n$ to show that there is a linear filtration on
$I^{*}$. 

We write down $I^{*}$ first, \[
I^{*}=(\{\prod_{i_{1}=1}^{k_{1}}x_{1i_{1}}\prod_{i_{2}=k_{1}+2}^{k_{2}}x_{2i_{2}}...\prod_{i_{l}=k_{l-1}+2}^{k_{l}}x_{ii_{l}}\prod_{i_{l+1}=k_{l}+1}^{k_{l+1}}x_{ii_{l+1}}...\prod_{i_{m}=k_{m-1}+2}^{n}x_{mi_{m}}\})\]
 with $0\leq k_{1}<k_{2}<...<k_{l}\leq k_{l+1}<...<k_{m-1}<n.$ When
$m=n$, we need $0\leq k_{1}<k_{2}<...<k_{l}\leq k_{l+1}<...<k_{m-1}<m.$
Without lost of generality, we assume $l=1$. Hence \[
I^{*}=(x_{11}x_{12},\:\{x_{11}x_{ii}|\: i=2,...,m\},\:\{x_{i,i-1}x_{j,j}|\: i=2,...,m,\: j=i,...,m\}).\]
 Now look at $x_{m,m}$, the terms $x_{i,i-1}x_{mm}$ for $i=2,...,m-1$
and $x_{11}x_{m,m}$ are multiple of $x_{m,m}$. Also $x_{i,i-1}x_{j,j}$
for $i=2,...,m-1$, $j=,...,m-1$ is divisible by $x_{i,i-1}$ and
$x_{11}x_{12}$ and $x_{11}x_{ii}$ for $i=2,...,m-1$ is divisible
by $x_{11}$. We can rewrite \[
I^{*}=(\{x_{i,i-1}(x_{i,i},...,x_{m,m})|\: i=2,...,m-1\},\: x_{11}(x_{12},x_{2,2},...,x_{m,m})).\]
Then we have

\begin{eqnarray*}
I^{*}=J_{1} & \subset & (\{x_{i,i-1}(x_{i,i},...,x_{m,m})|\: i=2,...,m-1\},x_{11})=J_{2}\\
 & \subset & (\{x_{i,i-1}(x_{i,i},...,x_{m,m})|\: i=3,...,m-1\},x_{11},x_{21})=J_{3}\\
 & \subset & (\{x_{i,i-1}(x_{i,i},...,x_{m,m})|\: i=4,...,m-1\},x_{11},x_{21},x_{32})=J_{4}\\
 & \subset & ...\\
 & \subset & (\{x_{i,i-1}|\: i=2,...,m-1\},x_{11})=J_{m}.\end{eqnarray*}
$J_{m}$ is generated by a regular sequence of degree 1, hence it
has reg$J_{m}$=1. 

We will show reg$(J_{l+1}/J_{l})=1$ for $l=1,...,m-1$. Then $\mbox{reg}(J_{l})=2=m-(m-2)$
for $l=1,...,m-1$. We write down \[
J_{l+1}=(\{x_{i,i-1}(x_{i,i},...,x_{m,m})|\: i=l+1,...,m-1\},x_{11},x_{21},x_{32},...,x_{l-1,l-2},x_{l,l-1})\]
 and \begin{eqnarray*}
J_{l} & = & (\{x_{i,i-1}(x_{i,i},...,x_{m,m})|\: i=l,...,m-1\},x_{11},x_{21},x_{32},...,x_{l-1,l-2})\\
 & = & (\{x_{i,i-1}(x_{i,i},...,x_{m,m})|\: i=l+1,...,m-1\},\: x_{l,l-1}(x_{l,l},...,x_{mm}),\\
 &  & x_{11},x_{21},x_{32},...,x_{l-1,l-2}).\end{eqnarray*}
Then \begin{eqnarray*}
J_{l+1}/J_{l} & = & (x_{l,l-1})/(x_{l,l-1}\cap(\{x_{i,i-1}(x_{i,i},...,x_{m,m})|\: i=l+1,...,m-1\}),\\
 &  & x_{l,l-1}\cap(x_{11},x_{21},x_{32},...,x_{l-1,l-2}),x_{l,l-1}(x_{l,l},...,x_{mm}))\\
 & = & (x_{l,l-1})/x_{l,l-1}(x_{l,l},...,x_{mm},x_{11},x_{21},x_{32},...,x_{l-1,l-2}).\end{eqnarray*}
Since \begin{eqnarray*}
 & \mbox{reg}((x_{l,l-1})/x_{l,l-1}(x_{l,l},...,x_{mm},x_{11},x_{21},x_{32},...,x_{l-1,l-2}))\\
= & \mbox{reg}(R/(x_{l,l},...,x_{mm},x_{11},x_{21},x_{32},...,x_{l-1,l-2}))+1 & =1,\end{eqnarray*}
we have $\mbox{reg}(J_{l+1}/J_{l})=1$ for all $l=1,...,m-1$. 

For the induction part, we write $I^{*}:=I_{n}^{*}$ when $X$ is
a $m$ by $n$ matrix. We assume $\mbox{reg}(I_{n-1}^{*})=n-1-(m-2)$
and the degree of the generating of $I_{n-1}^{*}$ is $n-1-(m-2)$.
We look at the variable $x_{m,n}$. When $k_{m-1}<n-1$, \[
\{\prod_{i_{1}=1}^{k_{1}}x_{1i_{1}}\prod_{i_{2}=k_{1}+1}^{k_{2}}x_{2i_{2}}\prod_{i_{3}=k_{2}+2}^{k_{3}}x_{3i_{3}}...\prod_{i_{m}=k_{m-1}+2}^{n}x_{m,i_{m}}\}\]
 is divisible by $x_{m,n}$ and we write \begin{eqnarray*}
A= & \{\prod_{i_{1}=1}^{k_{1}}x_{1i_{1}}\prod_{i_{2}=k_{1}+1}^{k_{2}}x_{2i_{2}}\prod_{i_{3}=k_{2}+2}^{k_{3}}x_{3i_{3}}...\prod_{i_{m}=k_{m-1}+2}^{n}x_{m,i_{m}}|\\
 & 0\leq k_{1}\leq k_{2}<k_{3}<...<k_{m-1}=n-2\}\\
= & \{(\prod_{i_{1}=1}^{k_{1}}x_{1i_{1}}\prod_{i_{2}=k_{1}+1}^{k_{2}}x_{2i_{2}}\prod_{i_{3}=k_{2}+2}^{k_{3}}x_{3i_{3}}...\prod_{i_{m}=k_{m-2}+2}^{n-2}x_{m-1,i_{m-1}})x_{m,n}|\\
 & 0\leq k_{1}\leq k_{2}<k_{3}<...<k_{m-1}=n-2\}\\
= & A'x_{m,n}\end{eqnarray*}
 When $k_{m-1}=n-1$, $\prod_{i_{1}=1}^{k_{1}}x_{1i_{1}}\prod_{i_{2}=k_{1}+1}^{k_{2}}x_{2i_{2}}\prod_{i_{3}=k_{2}+2}^{k_{3}}x_{3i_{3}}...\prod_{i_{m}=k_{m-1}+2}^{n}x_{mi_{m}}$
is not divisible by $ $$x_{m,n}$, we have \begin{eqnarray*}
B= & \{\prod_{i_{1}=1}^{k_{1}}x_{1i_{1}}\prod_{i_{2}=k_{1}+1}^{k_{2}}x_{2i_{2}}\prod_{i_{3}=k_{2}+2}^{k_{3}}x_{3i_{3}}...\prod_{i_{m}=k_{m-1}+2}^{n}x_{mi_{m}}|\\
 & 0\leq k_{1}\leq k_{2}<k_{3}<...<k_{m-1}=n-1\}\\
= & \{\prod_{i_{1}=1}^{k_{1}}x_{1i_{1}}\prod_{i_{2}=k_{1}+1}^{k_{2}}x_{2i_{2}}\prod_{i_{3}=k_{2}+2}^{k_{3}}x_{3i_{3}}...\prod_{i_{m}=k_{m-2}+2}^{n-1}x_{m-1,i_{m-1}}|\\
 & 0\leq k_{1}\leq k_{2}<k_{3}<...<k_{m-2}<k_{m-1}=n-1\}\\
= & \{(\prod_{i_{1}=1}^{k_{1}}x_{1i_{1}}\prod_{i_{2}=k_{1}+1}^{k_{2}}x_{2i_{2}}\prod_{i_{3}=k_{2}+2}^{k_{3}}x_{3i_{3}}...\prod_{i_{m}=k_{m-2}+2}^{n-1}x_{m-1,i_{m-1}})\\
 & x_{j,j-1+(n-1-(m-2))-1}|\:0\leq k_{1}\leq k_{2}<k_{3}<...<k_{m-2}<n-2,\:1<j<m\},\\
 & \{(\prod_{i_{1}=1}^{k_{1}}x_{1i_{1}}\prod_{i_{2}=k_{1}+1}^{k_{2}}x_{2i_{2}}\prod_{i_{3}=k_{2}+2}^{k_{3}}x_{3i_{3}}...\prod_{i_{m}=k_{m-2}+2}^{n-1}x_{m-1,i_{m-1}})\\
 & x_{1,1+(n-1-(m-2))-1}|\:0\leq k_{1}\leq k_{2}<k_{3}<...<k_{m-2}<n-2\}.\\
= & A'(\{x_{j,j-1+(n-1-(m-2))-1}|\:1<j<m\},x_{1,n-1-(m-2)}).\end{eqnarray*}
When $k_{m-1}\leq n-3$, we write \begin{eqnarray*}
C & = & \{\prod_{i_{1}=1}^{k_{1}}x_{1i_{1}}\prod_{i_{2}=k_{1}+1}^{k_{2}}x_{2i_{2}}\prod_{i_{3}=k_{2}+2}^{k_{3}}x_{3i_{3}}...\prod_{i_{m}=k_{m-1}+2}^{n}x_{m,i_{m}}|\\
 &  & 0\leq k_{1}\leq k_{2}<k_{3}<...<k_{m-2}<k_{m-1}\leq n-3\}\\
 & = & \{(\prod_{i_{1}=1}^{k_{1}}x_{1i_{1}}\prod_{i_{2}=k_{1}+1}^{k_{2}}x_{2i_{2}}\prod_{i_{3}=k_{2}+2}^{k_{3}}x_{3i_{3}}...\prod_{i_{m}=k_{m-1}+2}^{n-1}x_{m,i_{m}})x_{m,n}|\\
 &  & 0\leq k_{1}\leq k_{2}<k_{3}<...<k_{m-2}<k_{m-1}\leq n-3\}\\
 & = & C'x_{mn}.\end{eqnarray*}
 On the other hand $C$ can be written as\begin{eqnarray*}
C & = & \{(\prod_{i_{1}=1}^{k_{1}}x_{1i_{1}}\prod_{i_{2}=k_{1}+1}^{k_{2}}x_{2i_{2}}\prod_{i_{3}=k_{2}+2}^{k_{3}}x_{3i_{3}}...\prod_{i_{m}=k_{m-1}+2}^{n-1}x_{m,i_{m}})x_{j,j-1+(n-1-(m-2))-1}x_{m,n}|\\
 &  & 0\leq k_{1}\leq k_{2}<k_{3}<...<k_{m-2}<k_{m-1}-1<k_{m}\leq n-3,\:1<j<m\},\end{eqnarray*}
\begin{eqnarray*}
 &  & \{(\prod_{i_{1}=1}^{k_{1}}x_{1i_{1}}\prod_{i_{2}=k_{1}+1}^{k_{2}}x_{2i_{2}}\prod_{i_{3}=k_{2}+2}^{k_{3}}x_{3i_{3}}...\prod_{i_{m}=k_{m-1}+2}^{n-1}x_{m,i_{m}})x_{1,1+(n-1-(m-2))-1}x_{m,n}|\\
 &  & 0\leq k_{1}\leq k_{2}<k_{3}<...<k_{m-2}<k_{m-1}-1<k_{m}\leq n-3\}\\
 & = & C''x_{m,n}(\{x_{j,j-1+(n-1-(m-2))-1}|\:1<j<m\},x_{1,n-1-(m-2)}).\end{eqnarray*}
Notice $I_{n-1}^{*}=(A^{'},C')$ and $\mbox{d}(A')=\mbox{d}(C')$.
We also have \begin{eqnarray*}
A' & = & (\{(\prod_{i_{1}=1}^{k_{1}}x_{1i_{1}}\prod_{i_{2}=k_{1}+1}^{k_{2}}x_{2i_{2}}\prod_{i_{3}=k_{2}+2}^{k_{3}}x_{3i_{3}}...\prod_{i_{m}=k_{m-2}+2}^{n-2}x_{m-1,i_{m-1}})|\\
 &  & 0\leq k_{1}\leq k_{2}<k_{3}<...<k_{m-2}<k_{m-1}=n-2\})\\
 & = & (\{(\prod_{i_{1}=1}^{k_{1}}x_{1i_{1}}\prod_{i_{2}=k_{1}+1}^{k_{2}}x_{2i_{2}}\prod_{i_{3}=k_{2}+2}^{k_{3}}x_{3i_{3}}...\prod_{i_{m}=k_{m-2}+2}^{n-2}x_{m-1,i_{m-1}})x_{j,j-1+(n-2-(m-2))-1}|\\
 &  & 0\leq k_{1}\leq k_{2}<k_{3}<...<k_{m-2}<k_{m-1}=n-3,\:1<j<m\},\\
 &  & \{(\prod_{i_{1}=1}^{k_{1}}x_{1i_{1}}\prod_{i_{2}=k_{1}+1}^{k_{2}}x_{2i_{2}}\prod_{i_{3}=k_{2}+2}^{k_{3}}x_{3i_{3}}...\prod_{i_{m}=k_{m-2}+2}^{n-2}x_{m-1,i_{m-1}})x_{1,1+(n-2-(m-2))-1}|\\
 &  & 0\leq k_{1}\leq k_{2}<k_{3}<...<k_{m-2}<k_{m-1}=n-3\})\\
 & = & A''(\{x_{j,j-1+n-m-1}|\:1<j<m\},x_{1,n-2-(m-2)}).\end{eqnarray*}
Now look at $C'$, we obtain \begin{eqnarray*}
C' & = & (\{\prod_{i_{1}=1}^{k_{1}}x_{1i_{1}}\prod_{i_{2}=k_{1}+1}^{k_{2}}x_{2i_{2}}\prod_{i_{3}=k_{2}+2}^{k_{3}}x_{3i_{3}}...\prod_{i_{m}=k_{m-1}+2}^{n-1}x_{m,i_{m}}|\\
 &  & 0\leq k_{1}\leq k_{2}<k_{3}<...<k_{m-2}<k_{m-1}<n-3\},\\
 &  & \{\prod_{i_{1}=1}^{k_{1}}x_{1i_{1}}\prod_{i_{2}=k_{1}+1}^{k_{2}}x_{2i_{2}}\prod_{i_{3}=k_{2}+2}^{k_{3}}x_{3i_{3}}...\prod_{i_{m}=k_{m-1}+2}^{n-1}x_{m,i_{m}}|\\
 &  & 0\leq k_{1}\leq k_{2}<k_{3}<...<k_{m-2}<k_{m-1}=n-3\})\\
 & = & (\{(\prod_{i_{1}=1}^{k_{1}}x_{1i_{1}}\prod_{i_{2}=k_{1}+1}^{k_{2}}x_{2i_{2}}\prod_{i_{3}=k_{2}+2}^{k_{3}}x_{3i_{3}}...\prod_{i_{m}=k_{m-1}+2}^{n-2}x_{m,i_{m}})x_{m,n-1}|\\
 &  & 0\leq k_{1}\leq k_{2}<k_{3}<...<k_{m-2}<k_{m-1}<n-3\},\\
 &  & A''x_{m,n-1}).\end{eqnarray*}
Hence \begin{eqnarray*}
(A')\cap(C') & = & (A''(\{x_{j,j-1+n-m-1}|\:1<j<m\},x_{1,n-2-(m-2)}))\cap\\
 &  & (\{(\prod_{i_{1}=1}^{k_{1}}x_{1i_{1}}\prod_{i_{2}=k_{1}+1}^{k_{2}}x_{2i_{2}}\prod_{i_{3}=k_{2}+2}^{k_{3}}x_{3i_{3}}...\prod_{i_{m}=k_{m-1}+2}^{n-2}x_{m,i_{m}})x_{m,n-1}|\\
 &  & 0\leq k_{1}\leq k_{2}<k_{3}<...<k_{m-2}<k_{m-1}<n-3\},\\
 &  & A''x_{m,n-1})\\
 & = & (A''(\{x_{j,j-1+n-m-1}|\:1<j<m\},x_{1,n-2-(m-2)}))\cap\\
 &  & (\{(\prod_{i_{1}=1}^{k_{1}}x_{1i_{1}}\prod_{i_{2}=k_{1}+1}^{k_{2}}x_{2i_{2}}\prod_{i_{3}=k_{2}+2}^{k_{3}}x_{3i_{3}}...\prod_{i_{m}=k_{m-1}+2}^{n-2}x_{m,i_{m}})x_{m,n-1}|\\
 &  & 0\leq k_{1}\leq k_{2}<k_{3}<...<k_{m-2}<k_{m-1}<n-3\}),\\
 &  & (A''(\{x_{j,j-1+n-m-1}|\:1<j<m\},x_{1,n-2-(m-2)}))\cap\\
 &  & (A''x_{m,n-1})\\
 & = & (A'x_{m,n-1})\\
 & = & (C'(\{x_{j,j-1+n-m-1}|\:1<j<m\},x_{1,n-2-(m-2)})).\end{eqnarray*}
On the other hand, we have \begin{eqnarray*}
(A')\cap(C'x_{m,n}) & = & (A')\cap(C''x_{m,n}(\{x_{j,j-1+n-m}|\:1<j<m\},x_{1,n-1-(m-2)}))\\
 & \subset & (A'(\{x_{j,j-1+n-m}|\:1<j<m\},x_{1,n-m+1})).\end{eqnarray*}

We look at the filtration: \begin{eqnarray*}
I_{n}^{*} & = & (C,B,A)\\
 & = & (C'x_{m,n},\: A'(\{x_{j,j-1+n-m}|\:1<j<m\},x_{1,n-m+1}),\: A'x_{m,n})\\
 & \subset & (C'x_{m,n},A')\\
 & \subset & (C',A')=I_{n-1}^{*}.\end{eqnarray*}
 We have \begin{eqnarray*}
 & \mbox{reg}((C',A')/(C'x_{m,n},A'))\\
= & \mbox{reg}((C')/((C')\cap(A'),C'x_{m,n}))\\
= & \mbox{reg}((C')/(C'(\{x_{j,j-1+n-m-1}|\:1<j<m\},x_{1,n-2-(m-2)},x_{m,n}))\\
\mbox{=} & \mbox{reg}(R/(\{x_{j,j-1+n-m-1}|\:1<j<m\},x_{1,n-2-(m-2)},x_{mn})+\mbox{deg}(C')\\
= & \mbox{deg}(C')\\
= & n-(m-2)-1.\end{eqnarray*}
By induction hypothesis, we have $\mbox{reg}(C',A')=\mbox{reg}(I_{n-1}^{*})=n-1-(m-2)$,
hence $\mbox{reg}(C'x_{m,n},A')=n-(m-2)$. Similarly \begin{eqnarray*}
 & \mbox{reg}((C'x_{m,n},A')/(C'x_{m,n},\: A'(\{x_{j,j-1+n-m}|\:1<j<m\},x_{1,n-m+1}),\: A'x_{m,n}))\\
= & \mbox{reg}((A')/((A')\cap(C'x_{m,n}),A'(\{x_{j,j-1+n-m}|\:1<j<m\},x_{1,n-m+1}),\: A'x_{m,n}))\\
= & \mbox{reg}((A')/(A'(\{x_{j,j-1+n-m}|\:1<j<m\},x_{1,n-m+1},\: x_{m,n}))\\
= & \mbox{reg}(R/(A'(\{x_{j,j-1+n-m}|\:1<j<m\},x_{1,nm+1},\: x_{m,n})))+\mbox{d}(A')\\
= & \mbox{d}(A')\\
= & n-(m-2)-1.\end{eqnarray*}
Hence $\mbox{reg}(I_{n}^{*})=n-(m-2)$. \end{proof}
\begin{lem}
\label{REGL} The Alexander dual of $\mathrm{in}(\mathcal{L})$, $(\mathrm{in}(\mathcal{L}))^{*}$,
is generated by square free monomials with degree $mn-1+t_{2}-(s_{2}-1)+t_{1}-(s_{1}-1)$
and $\mathrm{reg}(\mathrm{in}(\mathcal{L}))^{*}=mn-1+t_{2}-(s_{2}-1)+t_{1}-(s_{1}-1)$.\end{lem}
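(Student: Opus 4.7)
The plan is to adapt the filtration technique of Lemma \ref{REGX} to the full intersection described in Lemma \ref{ADL}. First, using Lemma \ref{ADL}, one checks by direct count that each of the twelve components $(h_X)^*, (h_Y)^*, \ldots, (h_{^{k,w}I^{l,k,q}})^*$ contributes a product of factors along consecutive rows of $X$, $Y$ and $Z$ whose total length, when taken across all rows, sums to
\[
d \;:=\; mn - 1 + (t_1 - s_1 + 1) + (t_2 - s_2 + 1).
\]
The shared parameters $A_i$, $B_j$, $Q_k$ force any minimal generator of the total intersection to pick one representative from each component in a compatible way, and a careful counting argument shows that every minimal generator has this same degree $d$.

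By Fact \ref{linearAndreg}, the linearity assertion reduces to proving $\mathrm{reg}((\mathrm{in}(\mathcal{L}))^*) = d$. Since $(\mathrm{in}(\mathcal{L}))^*$ is generated in degree $d$ we have $\mathrm{reg}((\mathrm{in}(\mathcal{L}))^*) \geq d$ automatically, so the work lies in the upper bound. To obtain it I would construct a chain
\[
(\mathrm{in}(\mathcal{L}))^* \;=\; J_0 \;\subset\; J_1 \;\subset\; \cdots \;\subset\; J_N
\]
of monomial ideals in which $J_r$ is obtained from $J_{r-1}$ by replacing a carefully chosen family of top-row factors by shorter prefixes, in direct analogy with the chain $J_1 \subset \cdots \subset J_m$ used in the proof of Lemma \ref{REGX}. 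A natural ordering processes first the pure $X$-block (directly by Lemma \ref{REGX}), then the pure $Y$-block, and finally the mixed blocks $h_g, h_f, h_U, h_W, \ldots$ in increasing $(l,k)$. At each step a colon computation identifies the quotient $J_r / J_{r-1}$ with a principal monomial ideal modulo a regular sequence of linear forms, yielding $\mathrm{reg}(J_r/J_{r-1}) = d - 1$. Iterated application of the short exact sequence
\[
0 \;\longrightarrow\; J_{r-1} \;\longrightarrow\; J_r \;\longrightarrow\; J_r/J_{r-1} \;\longrightarrow\; 0
\]
together with a terminal bound $\mathrm{reg}(J_N) \leq d$ for the top ideal (which collapses to a sum of an ideal of linear forms and a monomial complete intersection) then delivers $\mathrm{reg}((\mathrm{in}(\mathcal{L}))^*) \leq d$.

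The main obstacle is bookkeeping. In Lemma \ref{REGX} a single family of product monomials is indexed by one integer tuple $k_1 < \cdots < k_{m-1}$; here the twelve families interlock through the shared indices $A_i, B_j, Q_k, b_j', B_j'$ appearing across components such as $(h_{W^{p,q,v}})^*$ and $(h_{V^{l,k,w}})^*$. The filtration must be ordered so that, when a component is processed, the constraints binding it to the other components do not leave behind extra high-degree generators. I expect this to require an outer induction on $(t_1, t_2)$ mirroring the induction on $n$ in Lemma \ref{REGX}, together with a systematic inner enumeration of the twelve component types that keeps track of how primes from one component interact with primes from another; the bulk of the combinatorial labor lies in verifying that each colon really does collapse to a principal ideal times an ideal of linear forms.
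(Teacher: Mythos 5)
Your overall strategy---reduce to $\mathrm{reg}=d$ via Fact \ref{linearAndreg}, build a filtration whose quotients have the form $P/PL$ with $L$ generated by variables regular modulo $P$, and run an outer induction on the matrix size as in Lemma \ref{REGX}---is the same as the paper's. But the inner mechanism you propose has a genuine gap. You suggest ordering the filtration by \emph{components} (``first the pure $X$-block, then the pure $Y$-block, then the mixed blocks''), yet $(\mathrm{in}(\mathcal{L}))^{*}$ is an \emph{intersection} of the twelve dual ideals, not a sum, so there is no meaningful way to ``process'' one component before another. The paper's filtration is instead organized by \emph{variables}: for a fixed variable $v$ in the last column (say $y_{1n}$, then $y_{2n}$, then $z_{m-1,n},\dots,z_{1n}$, then $x_{1n}$), every component is decomposed simultaneously as $(A v,\, AC,\, Bv)$, the intersection inherits a product decomposition indexed by choices of $A$ versus $B$ in each factor, and a short chain $J_0\subset J_1\subset\cdots\subset J_5$ strips the $v$-dependence with quotients of the form $P/PL$; iterating over all variables of the last column passes from $I_n$ to $I_{n-1}$. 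Without this simultaneous decomposition your ``colon collapses to a principal ideal times linear forms'' step has nothing to act on, and the interlocking constraints among the $A_i,B_j,Q_k$ that you flag as ``bookkeeping'' are exactly where a component-by-component scheme breaks down.

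Two further points. First, your regularity accounting is off: the quotients are $P/PL$ with $P$ generally non-principal, $\mathrm{reg}(P/PL)=\mathrm{reg}(P)$, and these regularities are \emph{not} all equal to $d-1$; in the paper the regularity increases by exactly one per eliminated variable (so by $m+2$ per column), and the total $\mathrm{reg}(I_n)=\mathrm{reg}(I_{n-1})+m+2$ recovers $d$ only after summing these increments back through the base case. Second, you omit the base case $m=n$ entirely; there the intersection must be computed explicitly, the filtration is run down to the regular sequence $(x_{m,1},\dots,x_{1,m})$, and the degree claim $d=mn-1+(t_1-s_1+1)+(t_2-s_2+1)$ is actually verified rather than asserted by a ``careful counting argument.'' As written, both the equigeneration in degree $d$ and the upper bound $\mathrm{reg}\le d$ rest on steps that are not yet proofs.
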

\begin{proof}
We prove this lemma by inducting on $n$. Since $s_{1}>s_{2}$, the
generators of the ideal $(\mbox{in}(\mathcal{L}))^{*}$ do not involving
variables $x_{ij}$, $y_{lk}$, $z_{pq}$ when $i,l,p>s_{1}$. Hence
we may assume $s_{1}=m$. When $m=n$, we have $m=s_{1}\leq t_{1}\leq m$.
Then we have \[
(\mbox{in}(\mathcal{L}))^{*}=(h_{X})^{*}\cap(h_{Y})^{*}\cap(h_{g})^{*}\cap(h_{f})^{*}\cap(h_{U})^{*}\cap(h_{W})^{*}\cap(h_{W^{p,q,v}})^{*}.\]
 We write down each component, \[
(h_{X})^{*}=(x_{1m},\: x_{2,m-1},...,\: x_{m,1}),\]
 \[
(h_{f})^{*}=(t_{1,1},y_{2,m},y_{3,m-1},...,y_{m2}),\]
 \begin{eqnarray*}
(h_{Y})^{*} & = & (\{\prod_{b_{1}=B_{2}+2}^{t_{2}}y_{1b_{1}}\prod_{b_{2}=B_{3}+2}^{B_{2}}y_{2b_{2}}...\prod_{b_{s_{2}}=1}^{B_{s_{2}}}y_{s_{2}b_{s_{2}}}|0\leq B_{s_{2}}<...<B_{2}<t_{2}\}),\end{eqnarray*}
\[
(h_{g})^{*}=(\{\prod_{(i,j)=(1,1)}^{(u_{1},u_{2})}z_{ij}\prod_{(l,k)=(u_{1},u_{2})\uplus2}^{(m,n)}x_{lk}|\:(0,0)\leq(u_{1},u_{2})<(m,n)\}),\]
\begin{eqnarray*}
(h_{U})^{*} & = & \bigcap_{p=1}^{m-1}\left(\bigcap_{(p,u_{2})=(p,m)}^{(p,m-(p-1))}(z_{p,u_{2}},x_{1,m},...,x_{p,m-(p-1)},y_{p+1,m-p},...,y_{m,1})\right.\\
 &  & \bigcap_{(p,u_{2})=(p,m-(p-1)-1)}^{(p,1)}(z_{p,u_{2}},x_{1,m},...,x_{p,u_{2}},y_{p+1,m-p+1},...,y_{j,u_{2}+1},\\
 &  & \left.y_{j+1,u_{2}-1},...,y_{m,1})\right),\end{eqnarray*}
\begin{eqnarray*}
(h_{W})^{*} & = & \bigcap_{p=1}^{m-1}\bigcap_{(p,u_{2})=(p,m)}^{(p,m-(p-1))}(z_{p,u_{2}},x_{1,m},...,x_{p,m-(p-1)},y_{1,m},...,y_{p-1,m-p+2},\\
 &  & y_{p,m-p},y_{p+1,m-p+1},y_{p+2,m-p-1},...,y_{m,1}),\end{eqnarray*}
\begin{eqnarray*}
(h_{W^{p,q,v}})^{*} & = & \bigcap_{p=1}^{m-1}\bigcap_{v=p+1}^{m-1}\bigcap_{(p,u_{2})=(p,m)}^{(p,m-(p-1))}(z_{p,u_{2}},x_{1,m},...,x_{p,m-(p-1)},y_{1,m},...,y_{p-1,m-p+2},\\
 &  & y_{p.m-p},y_{p+1,m-p+1},y_{p+1,m-p-1},y_{p+2,m-p},y_{p+2,m-p-2},...,\\
 &  & y_{v,m-(v-1)-1},y_{v,m-(v-1)+1},y_{v+1,m-(v-1)},...,y_{m,1}).\end{eqnarray*}
\\
Then we have\begin{eqnarray*}
(\mbox{in}(\mathcal{L}))^{*} & = & (\{\prod_{(i,j)=(1,1)}^{(u_{1},u_{2})}z_{ij}\prod_{(l,k)=(u_{1},u_{2})\uplus2}^{(m,n)}x_{lk}x_{i,m-(i-1)}\prod_{b_{1}=B_{2}+2}^{t_{2}}y_{1b_{1}}...\prod_{b_{s_{2}}=1}^{B_{s_{2}}}y_{s_{2}b_{s_{2}}}|\\
 &  & (u_{1},u_{2})<(m-1,m-1),1\leq i\leq m,0\leq B_{s_{2}}<...<B_{2}<t_{2}\},\end{eqnarray*}
\begin{eqnarray*}
 &  & \{\prod_{(i,j)=(1,1)}^{(u_{1},u_{2})}z_{ij}\prod_{(l,k)=(u_{1},u_{2})\uplus2}^{(m,n)}x_{lk}x_{i,m-(i-1)}\prod_{b_{1}=B_{2}+2}^{t_{2}}y_{1b_{1}}...\prod_{b_{s_{2}}=1}^{B_{s_{2}}}y_{s_{2}b_{s_{2}}}|\\
 &  & (m-1,m-1)\leq(u_{1},u_{2})<(m-2,m),1\leq i\leq m-1,0\leq B_{s_{2}}<...<B_{2}<t_{2}\},\end{eqnarray*}
 \begin{eqnarray*}
 &  & \{\prod_{(i,j)=(1,1)}^{(u_{1},u_{2})}z_{ij}\prod_{(l,k)=(u_{1},u_{2})\uplus2}^{(m,n)}x_{lk}\prod_{b_{1}=B_{2}+2}^{t_{2}}y_{1b_{1}}...\prod_{b_{s_{2}}=1}^{B_{s_{2}}}y_{s_{2}b_{s_{2}}}y_{m,1}|\\
 &  & (m-1,m-1)\leq(u_{1},u_{2})<(m-2,m),s_{2}<m,0\leq B_{s_{2}}<...<B_{2}<t_{2}\},\end{eqnarray*}
\begin{eqnarray*}
 &  & \{\prod_{(i,j)=(1,1)}^{(u_{1},u_{2})}z_{ij}\prod_{(l,k)=(u_{1},u_{2})\uplus2}^{(m,n)}x_{lk}\prod_{b_{1}=B_{2}+2}^{t_{2}}y_{1b_{1}}...\prod_{b_{s_{2}}=1}^{B_{s_{2}}}y_{s_{2}b_{s_{2}}}y_{m,2}|\\
 &  & (m-1,m-1)\leq(u_{1},u_{2})<(m-2,m),s_{2}=m,0\leq B_{s_{2}}<...<B_{2}<t_{2}\},\end{eqnarray*}
\begin{eqnarray*}
 &  & \{\prod_{(i,j)=(1,1)}^{(u_{1},u_{2})}z_{ij}\prod_{(l,k)=(u_{1},u_{2})\uplus2}^{(m,n)}x_{lk}x_{i,m-(i-1)}\prod_{b_{1}=B_{2}+2}^{t_{2}}y_{1b_{1}}...\prod_{b_{s_{2}}=1}^{B_{s_{2}}}y_{s_{2}b_{s_{2}}}|\\
 &  & (m-2,m)\leq(u_{1},u_{2})<(m-2,1),1\leq i\leq m-2,0\leq B_{s_{2}}<...<B_{2}<t_{2}\},\end{eqnarray*}
\begin{eqnarray*}
 &  & \{\prod_{(i,j)=(1,1)}^{(m-2,m)}z_{ij}\prod_{(l,k)=(m-1,2)}^{(m,n)}x_{lk}x_{m-1,1}\prod_{b_{1}=B_{2}+2}^{t_{2}}y_{1b_{1}}...\prod_{b_{s_{2}}=1}^{B_{s_{2}}}y_{s_{2}b_{s_{2}}}|\\
 &  & 0\leq B_{s_{2}}<...<B_{2}<t_{2}\},\end{eqnarray*}
\begin{eqnarray*}
 &  & \{\prod_{(i,j)=(1,1)}^{(m-2,m)}z_{ij}\prod_{(l,k)=(m-1,2)}^{(m,n)}x_{lk}\prod_{b_{1}=B_{2}+2}^{t_{2}}y_{1b_{1}}...\prod_{b_{s_{2}}=1}^{B_{s_{2}}}y_{s_{2}b_{s_{2}}}y_{m,2}|\\
 &  & 0\leq B_{s_{2}}<...<B_{2}<t_{2}\},\end{eqnarray*}
\begin{eqnarray*}
 &  & \{\prod_{(i,j)=(1,1)}^{(u_{1},u_{2})}z_{ij}\prod_{(l,k)=(u_{1},u_{2})\uplus2}^{(m,n)}x_{lk}\prod_{b_{1}=B_{2}+2}^{t_{2}}y_{1b_{1}}...\prod_{b_{s_{2}}=1}^{B_{s_{2}}}y_{s_{2}b_{s_{2}}}(y_{m,1},\: y_{m-1,2})|\\
 &  & (m-2,m)<(u_{1},u_{2})<(m-2,1),0\leq B_{s_{2}}<...<B_{2}<t_{2},\: s_{2}<m\},\end{eqnarray*}
\begin{eqnarray*}
 &  & \{\prod_{(i,j)=(1,1)}^{(u_{1},u_{2})}z_{ij}\prod_{(l,k)=(u_{1},u_{2})\uplus2}^{(m,n)}x_{lk}\prod_{b_{1}=B_{2}+2}^{t_{2}}y_{1b_{1}}\prod_{b_{2}=B_{3}+2}^{B_{2}}y_{2b_{2}}...\prod_{b_{s_{2}}=1}^{B_{s_{2}}}y_{s_{2}b_{s_{2}}}(y_{m,2},\: y_{m-1,1})|\\
 &  & (m-2,m)<(u_{1},u_{2})<(m-2,1),\:0\leq B_{s_{2}}<...<B_{2}<t_{2},\: s_{2}=m\mbox{ and }\\
 &  & \prod_{b_{1}=B_{2}+2}^{t_{2}}y_{1b_{1}}\prod_{b_{2}=B_{3}+2}^{B_{2}}y_{2b_{2}}...\prod_{b_{s_{2}}=1}^{B_{s_{2}}}y_{s_{2}b_{s_{2}}}\:\mbox{is divisible by }y_{m,1}\mbox{ or }y_{m-1,2}\},\end{eqnarray*}
\begin{eqnarray*}
 &  & \{\prod_{(i,j)=(1,1)}^{(u_{1},u_{2})}z_{ij}\prod_{(l,k)=(u_{1},u_{2})\uplus2}^{(m,n)}x_{lk}x_{i,m-(i-1)}\prod_{b_{1}=B_{2}+2}^{t_{2}}y_{1b_{1}}\prod_{b_{2}=B_{3}+2}^{B_{2}}y_{2b_{2}}...\prod_{b_{s_{2}}=1}^{B_{s_{2}}}y_{s_{2}b_{s_{2}}}|\\
 &  & (m-2,1)\leq(u_{1},u_{2})<(m-3,2),1\leq i\leq m-3,\:0\leq B_{s_{2}}<...<B_{2}<t_{2}\},\end{eqnarray*}
\begin{eqnarray*}
 &  & \{\prod_{(i,j)=(1,1)}^{(m-2,1)}z_{ij}\prod_{(l,k)=(m-2,3)}^{(m,n)}x_{lk}x_{m-2,2}\prod_{b_{1}=B_{2}+2}^{t_{2}}y_{1b_{1}}...\prod_{b_{s_{2}}=1}^{B_{s_{2}}}y_{s_{2}b_{s_{2}}}|\\
 &  & 0\leq B_{s_{2}}<...<B_{2}<t_{2}\},\end{eqnarray*}
\begin{eqnarray*}
 &  & \{\prod_{(i,j)=(1,1)}^{(m-2,1)}z_{ij}\prod_{(l,k)=(m-2,3)}^{(m,n)}x_{lk}\prod_{b_{1}=B_{2}+2}^{t_{2}}y_{1b_{1}}...\prod_{b_{s_{2}}=1}^{B_{s_{2}}}y_{s_{2}b_{s_{2}}}(y_{m-1,3},y_{m,1})|\\
 &  & 0\leq B_{s_{2}}<...<B_{2}<t_{2},\: s_{2}<m\},\end{eqnarray*}
\begin{eqnarray*}
 &  & \{\prod_{(i,j)=(1,1)}^{(m-2,1)}z_{ij}\prod_{(l,k)=(m-2,3)}^{(m,n)}x_{lk}\prod_{b_{1}=B_{2}+2}^{t_{2}}y_{1b_{1}}...\prod_{b_{s_{2}}=1}^{B_{s_{2}}}y_{s_{2}b_{s_{2}}}(y_{m-1,3},y_{m,2})|\\
 &  & 0\leq B_{s_{2}}<...<B_{2}<t_{2}\mbox{ and }s_{2}=m,\prod_{b_{1}=B_{2}+2}^{t_{2}}y_{1b_{1}}\prod_{b_{2}=B_{3}+2}^{B_{2}}y_{2b_{2}}...\prod_{b_{s_{2}}=1}^{B_{s_{2}}}y_{s_{2}b_{s_{2}}}\\
 &  & \mbox{is divisible by }y_{m,1}\},\end{eqnarray*}
\begin{eqnarray*}
 &  & \{\prod_{(i,j)=(1,1)}^{(m-3,m)}z_{ij}\prod_{(l,k)=(m-2,2)}^{(m,n)}x_{lk}x_{m-2,1}\prod_{b_{1}=B_{2}+2}^{t_{2}}y_{1b_{1}}...\prod_{b_{s_{2}}=1}^{B_{s_{2}}}y_{s_{2}b_{s_{2}}}|\\
 &  & 0\leq B_{s_{2}}<...<B_{2}<t_{2}\},\end{eqnarray*}
\begin{eqnarray*}
 &  & \{\prod_{(i,j)=(1,1)}^{(m-3,m)}z_{ij}\prod_{(l,k)=(m-2,2)}^{(m,n)}x_{lk}\prod_{b_{1}=B_{2}+2}^{t_{2}}y_{1b_{1}}...\prod_{b_{s_{2}}=1}^{B_{s_{2}}}y_{s_{2}b_{s_{2}}}(y_{m-1,3,}y_{m,2})|\\
 &  & 0\leq B_{s_{2}}<...<B_{2}<t_{2}\},\end{eqnarray*}
\begin{eqnarray*}
 &  & \{\prod_{(i,j)=(1,1)}^{(u_{1},u_{2})}z_{ij}\prod_{(l,k)=(u_{1},u_{2})\uplus2}^{(m,n)}x_{lk}\prod_{b_{1}=B_{2}+2}^{t_{2}}y_{1b_{1}}...\prod_{b_{s_{2}}=1}^{B_{s_{2}}}y_{s_{2}b_{s_{2}}}(y_{m,1},y_{m-1,2},y_{m-2,3})|\\
 &  & (m-3,m-1)\leq(u_{1},u_{2})<(m-3,2),0\leq B_{s_{2}}<...<B_{2}<t_{2}\},\\
 &  & ...\\
 &  & \{\prod_{(l,k)=(1,2)}^{(m,n)}x_{lk}\prod_{b_{1}=B_{2}+2}^{t_{2}}y_{1b_{1}}\prod_{b_{2}=B_{3}+2}^{B_{2}}y_{2b_{2}}...\prod_{b_{s_{2}}=1}^{B_{s_{2}}}y_{s_{2}b_{s_{2}}}(y_{2,m},y_{3,m-1},...,y_{m,2})|\\
 &  & 0\leq B_{s_{2}}<...<B_{2}<t_{2}\}).\end{eqnarray*}

Notice that all the elements are in the same degree $mm-1+1+t_{2}-(s_{2}-1)$.
We find a filtration starting from $(\mbox{in}(\mathcal{L}))^{*}$
and ending at $(x_{m,1},x_{m-1,2},....,x_{1m})$. Each quotient of
this filtration will have the form $P/PL$, where $P$ is an ideal
generated in the same degree and $L$ is an ideal generated by variables
such that those variables form a regular sequence modulo $P$. 

We look at the variable $y_{s_{2},1}$. The elements in $(\mbox{in}(\mathcal{L}))^{*}$
that are divisible by $y_{s_{2},1}$ must have $B_{s_{2}}>0$. The
elements in $\mbox{(in}(\mathcal{L}))^{*}$ that are not divisible
by $y_{s_{2},1}$ must have $B_{s_{2}}=0$. Hence $(\mbox{in}(\mathcal{L})^{*})=(A_{1}(y_{s_{2},1},...,y_{s_{2}-1,2,}..y_{1,s_{2})}),C_{1}y_{s_{2},1})$,
where the elements in $C_{1}$ have $B_{s_{2}}>1$. Also all the elements
of $A_{1}$ and $C_{1}$ are in the same degree. Furthermore, $A_{1}\cap C_{1}=A_{1}c_{1}=C_{1}(\{a_{1i}\})$,
$(y_{s_{2},1},...,y_{1,s_{2}})$ is a regular sequence modulo $A_{1}$
and $(y_{s_{2},1},\{a_{1i}\})$ is a regular sequence modulo $C_{1}$.
We look at the following filtration\[
(\mbox{in}(\mathcal{L}))^{*}\subset(A_{1},C_{1}y_{s_{2},1})\subset(A_{1},C_{1}).\]
Then we can use the proof of Lemma \ref{REGX} to show that the quotients
are $A_{1}/A_{1}(y_{s_{2},1},...,y_{1,s_{2}})$ and $C_{1}/C_{1}(\{a_{1i}\},\: y_{s_{2},1})$.
Notice the following equalities: $\mbox{reg}(A_{1}/A_{1}(y_{s_{2},1},...,y_{1,s_{2}}))=\mbox{reg}(A_{1})$
and $\mbox{reg}(C_{1}/C_{1}(\{a_{1i}\},y_{s_{2},1})=\mbox{reg}(C_{1})$. 

Next, we look at $y_{s_{2},2}$ and write $(A_{1},C_{1})=(A_{2}(y_{s_{2},2},....,y_{1,s_{2}+1}),C_{2}y_{s_{2},2})$.
As before we have a filtration \[
(A_{1},C_{1})\subset(A_{2},C_{2}y_{s_{2},2})\subset(A_{2},C_{2}),\]
and the quotients are $A_{2}/A_{2}(y_{s_{2},2},...,y_{1,s_{2}+1})$
and $C_{2}/C_{2}(\{a_{2i}\},y_{s_{2},2})$. We can continue to $y_{s_{2},3}$
until $y_{s_{2},t_{2}-(s_{2}-1)}$, we will reduce to an ideal $J_{1}$
which is generated in the same degree $mm-1+1$. 

Now we look at the variable $z_{1,1}$. When $(u_{1},u_{2})=(0,0)$
in an element, $z_{1,1}$ is not a factor of this element. When $(u_{1},u_{2})\leq(1,1)$,
then $z_{1,1}$ is a factor. The ideal $J_{1}$ can be written as
$(D_{1,1}(z_{1,1},\{d_{1,1,i}\}),\, E_{1,1}z_{1,1})$. Hence we have
a filtration\[
J_{1}\subset(D_{1,1},E_{1,1}z_{1,1})\subset(D_{1,1},E_{1,1})\]
 with quotients $D_{1,1}/D_{1,1}(z_{1,1},\{d_{1,1,i}\})$ and $E_{1,1}/E_{1,1}(\{e_{1,1,i}\},z_{1,1})$.
We look at $z_{1,2}$ next and reduce to an ideal $(D_{1,2},E_{1,2})$,
we continue to $z_{m,m-1}$. We can find a filtration from $J_{1}$
to $(D_{m,m-1},E_{m,m-1})=(x_{m,1},....,x_{1m})$. 

Since $\mbox{reg}(x_{m,1},....,x_{1,m})=\mbox{reg}(D_{m,m-1},E_{m,m-1})=1$,
it follows that $\mbox{reg}(D_{m,m-1})$ and $\mbox{reg}(E_{m,m-1})$
are equal to 1. Also the final quotient \[
(D_{m,m-1},E_{m,m-1})/(D_{m,m-1},E_{m,m-1}z_{m,m-1})\]
 has regularity equals to the regularity of $E_{m,m-1}$. It follows
that \[
\mbox{reg}(D_{m,m-1},E_{m,m-1}z_{m,m-1})=2.\]
 Since the quotient $(D_{m,m-1},E_{m,m-1}z_{m,m-1})/(D_{m,m-2},E_{m,m-2})$
has regularity equal to $\mbox{reg}(D_{m,m-1})=1$, we have $\mbox{reg}(D_{m,m-2},E_{m,m-2})=2$.
Continue with the same argument, we obtain $\mbox{reg}(J_{1})=1+mm-1$.
Hence $\mbox{reg}((\mbox{in}\mathcal{L})^{*})=1+mm-1+t_{2}-(s_{2}-1)$. 

For the induction steps, we write $I_{n}$ for the Alexander dual
ideal $(\mbox{in}(\mathcal{L}))^{*}$ in the $m$ by $n$ matrix case.
We assume by induction hypothesis that $\mbox{reg}(I_{n-1})=\mbox{d}(I_{n-1})=m(n-1)-1+t_{1}-(s_{1}-1)+t_{2}-(s_{2}-1)$.
We will show that we can build a filtration from $I_{n}$ to $I_{n-1}$
such that the quotients are the form $P/PL$ where $P$ is an ideal
and $L$ is an ideal generated by variables such that they are a regular
sequence modulo $P$. From Lemma \ref{ADL} we know that each summand
of $\mbox{in}(\mathcal{L})$ is generated by monomials satisfying
the assumption of Lemma \ref{ADX}. For a fixed variable $y_{ij}$,
we can write $(h_{Y})^{*}=(A^{Y}y_{ij},A^{Y}(\{a_{l}^{Y}\}),B^{Y}y_{ij})$.
Also $A^{Y}\cap B^{Y}=B^{Y}(\{b_{i}^{Y}\})$, where $\{b_{i}^{Y}\}$
are variables such that $(y_{ij},\{b_{i}^{Y}\})$ is a regular sequence
modulo $B^{Y}$ and $(y_{ij},\{y_{lk}\})$ is a regular sequence modulo
$A^{Y}$. Similarly, $(h_{U})^{*}=(A^{U}y_{ij},A^{U}(\{y_{lk},x_{p,q},z_{uv}\}),B^{U}y_{ij})$,
$(h_{W})^{*}=(A^{W}y_{ij},A^{W}(\{y_{lk},x_{p,q},z_{u,v}\}),B^{W}y_{ij})$,
and all other components of $(\mbox{in}(\mathcal{L}))^{*}$ that involve
$y_{ij}$. For $(h_{X})^{*}$ that does not involve $y_{ij}$ we leave
as it is and similarly for others that do not involve $y_{ij}$. 

\begin{flushleft}
Claim: There is a filtration from $\mbox{in}(\mathcal{L})^{*}$ to
an ideal \[
I_{y_{ij}}=(h_{X})^{*}\cap(A^{Y},B^{Y})\cap(h_{g})^{*}\cap(A^{U},B^{U})\cap...\cap(A^{W},B^{W})\]
 such that the quotients are the form $P/PL$ where $P$ is an ideal
and $L$ is an ideal generated by variables such that they are a regular
sequence modulo $P$.
\par\end{flushleft}

With this claim, we can continue picking another variable and reduce
to a bigger ideal that does not involve the new variable. We can continue
the process until we reach an ideal that does not involve any $z_{i,n}$,
or $x_{i,n}$ or $y_{i,n}$. This ideal is $I_{n-1}$. 

\begin{flushleft}
Proof of claim: Without lost of generality, we just need to show:
there is a filtration from $(A^{Y}(y_{ij},\{a_{l}^{Y}\}),B^{Y}y_{ij})\cap(A^{U}(y_{ij},\{a_{l}^{U}\}),B^{U}y_{ij}))$
to $(A^{Y},B^{Y})\cap(A^{U},B^{U})$ with quotients are the form $P/PL$
as above. For convenience, we write $A^{Y}(\{a_{l}^{Y}\})=A^{Y}C$
and $A^{U}(\{a_{l}^{U}\})=A^{U}D$. Then we look at the following
filtration: \begin{eqnarray*}
 & (A^{Y}y_{ij},A^{Y}C,B^{Y}y_{ij})\cap(A^{U}y_{ij},A^{U}D,B^{U}y_{ij})\\
= & \qquad\qquad((A^{Y}\cap A^{U})y_{ij},(A^{Y}\cap A^{U})Cy_{ij},(B^{Y}\cap A^{U})y_{ij},(A^{Y}\cap A^{U})CD,\quad\\
 & (A^{Y}\cap B^{U})y_{ij},(B^{Y}\cap B^{U})y_{ij})\qquad\qquad & =:J_{0}\end{eqnarray*}
\begin{eqnarray*}
\subset & ((A^{Y}\cap A^{U})y_{ij},(A^{Y}\cap A^{U})C,(B^{Y}\cap A^{U})y_{ij},(A^{Y}\cap B^{U})y_{ij},(B^{Y}\cap B^{U})y_{ij}) & =:J_{1}\\
\subset & ((A^{Y}\cap A^{U}),(B^{Y}\cap A^{U})y_{ij},(A^{Y}\cap B^{U})y_{ij},(B^{Y}\cap B^{U})y_{ij}) & =:J_{2}\\
\subset & ((A^{Y}\cap A^{U}),(B^{Y}\cap A^{U}),(A^{Y}\cap B^{U})y_{ij},(B^{Y}\cap B^{U})y_{ij}) & =:J_{3}\\
\subset & ((A^{Y}\cap A^{U}),(B^{Y}\cap A^{U}),(A^{Y}\cap B^{U}),(B^{Y}\cap B^{U})y_{ij}) & =:J_{4}\\
\subset & ((A^{Y}\cap A^{U}),(B^{Y}\cap A^{U}),(A^{Y}\cap B^{U}),(B^{Y}\cap B^{U}))=(A^{Y},B^{Y})\cap(A^{U},B^{U}) & =:J_{5}.\end{eqnarray*}

\par\end{flushleft}

\begin{flushleft}
We have \begin{eqnarray*}
 & J_{1}/J_{0}\\
= & (A^{Y}\cap A^{U})C/((A^{Y}\cap A^{U})CD,(A^{Y}\cap A^{U})C\cap(A^{Y}\cap A^{U})y_{ij},(A^{Y}\cap A^{U})C\cap\\
 & (B^{Y}\cap A^{U})y_{ij},(A^{Y}\cap A^{U})C\cap(A^{Y}\cap B^{U})y_{ij},(A^{Y}\cap A^{U})C\cap(B^{Y}\cap B^{U})y_{ij})\\
= & (A^{Y}\cap A^{U})C/((A^{Y}\cap A^{U})C(D,y_{ij}),(A^{Y}\cap B^{Y})\cap A^{U}Cy_{ij},(A^{Y}\cap A^{U}\cap B^{U})Cy_{ij},\\
 & (A^{Y}\cap B^{Y})\cap(A^{U}\cap B^{U})Cy_{ij})\\
= & (A^{Y}\cap A^{U})C/((A^{Y}\cap A^{U})C(D,y_{ij}),(A^{Y}a^{Y})\cap A^{U}Cy_{ij},(A^{Y}\cap A^{U}a^{U})Cy_{ij},\\
 & (A^{Y}a^{Y})\cap(A^{U}a^{U})Cy_{ij})\\
= & (A^{Y}\cap A^{U})C/((A^{Y}\cap A^{U})C(D,y_{ij})).\end{eqnarray*}
Hence $\mbox{reg}(J_{1}/J_{0})=\mbox{reg}(A^{Y}\cap A^{U})C$. Also
\begin{eqnarray*}
 & J_{2}/J_{1}\\
= & (A^{Y}\cap A^{U})/((A^{Y}\cap A^{U})(C,y_{ij}),(A^{Y}\cap A^{U})\cap(B^{Y}\cap A^{U})y_{ij},(A^{Y}\cap A^{U})\cap\\
 & (A^{Y}\cap B^{U})y_{ij},(A^{Y}\cap A^{U})\cap(B^{Y}\cap B^{U})y_{ij})\\
= & (A^{Y}\cap A^{U})/((A^{Y}\cap A^{U})(C,y_{ij}),(A^{Y}\cap B^{Y}\cap A^{U})y_{ij},(A^{Y}\cap A^{U}\cap B^{U})y_{ij},\\
 & (A^{Y}\cap B^{Y})\cap(A^{U}\cap B^{U})y_{ij})\end{eqnarray*}
\begin{eqnarray*}
= & (A^{Y}\cap A^{U})/((A^{Y}\cap A^{U})(C,y_{ij}),(A^{Y}a^{Y}\cap A^{U})y_{ij},(A^{Y}\cap A^{U}a^{U})y_{ij},\\
 & (A^{Y}a^{Y})\cap(A^{U}a^{U})y_{ij})\\
= & (A^{Y}\cap A^{U})/((A^{Y}\cap A^{U})(C,y_{ij})).\end{eqnarray*}
Hence $\mbox{reg}(J_{2}/J_{1})=\mbox{reg}(A^{Y}\cap A^{U})$. Similarly
for $ $\begin{eqnarray*}
 & J_{3}/J_{2}\\
= & (B^{Y}\cap A^{U})/((B^{Y}\cap A^{U})y_{ij},(B^{Y}\cap A^{U})\cap(A^{Y}\cap A^{U}),(B^{Y}\cap A^{U})\cap(A^{Y}\cap B^{U})y_{ij},\\
 & (B^{Y}\cap A^{U})\cap(B^{Y}\cap B^{U})y_{ij})\\
= & (B^{Y}\cap A^{U})/((B^{Y}\cap A^{U})y_{ij},(B^{Y}\cap A^{Y}\cap A^{U}),(B^{Y}\cap A^{Y})\cap(A^{U}\cap B^{U})y_{ij},\\
 & (B^{Y}\cap A^{U}\cap B^{U})y_{ij})\\
= & (B^{Y}\cap A^{U})/((B^{Y}\cap A^{U})y_{ij},(B^{Y}(\{b_{i}^{Y}\})\cap A^{U}),(B^{Y}(\{b_{i}^{Y}\}))\cap(A^{U}a^{U})y_{ij},\\
 & (B^{Y}\cap A^{U}a^{U})y_{ij})\\
= & (B^{Y}\cap A^{U})/((B^{Y}\cap A^{U})(\{b_{i}^{Y}\},y_{ij})).\end{eqnarray*}
Thus $\mbox{reg}(J_{3}/J_{2})=\mbox{reg}(B^{Y}\cap A^{U})$. Similarly
for \begin{eqnarray*}
 & J_{4}/J_{3}\\
= & (A^{Y}\cap B^{U})/((A^{Y}\cap B^{U})y_{ij},(A^{Y}\cap B^{U})\cap(A^{Y}\cap A^{U}),(A^{Y}\cap B^{U})\cap(B^{Y}\cap A^{U}),\\
 & (A^{Y}\cap B^{U})\cap(B^{Y}\cap B^{U})y_{ij})\\
= & (A^{Y}\cap B^{U})/((A^{Y}\cap B^{U})y_{ij},(A^{Y}\cap B^{U}(\{b_{i}^{Y}\})),(A^{Y}a^{Y}\cap B^{U}(\{b_{i}^{Y}\})),\\
 & (A^{Y}a^{Y}\cap B^{U}y_{ij}))\\
= & (A^{Y}\cap B^{U})/((A^{Y}\cap B^{U})(y_{ij},\{b_{i}^{Y}\})).\end{eqnarray*}
Therefore $\mbox{reg}(J_{4}/J_{3})=\mbox{reg}(A^{Y}\cap B^{U})$.
Finally, \begin{eqnarray*}
 & J_{5}/J_{4}\\
= & (B^{Y}\cap B^{U})/((B^{Y}\cap B^{U})y_{ij},(B^{Y}\cap B^{U})\cap(A^{Y}\cap A^{U}),(B^{Y}\cap B^{U})\cap\\
 & (B^{Y}\cap A^{U}),(B^{Y}\cap B^{U})\cap(A^{Y}\cap B^{U}))\\
= & (B^{Y}\cap B^{U})/((B^{Y}\cap B^{U})y_{ij},(B^{Y}(\{b_{i}^{Y}\})\cap B^{U}(\{b_{i}^{U}\})),\\
 & (B^{Y}\cap B^{U}(\{b_{i}^{U}\})),(B^{Y}(\{b_{i}^{Y}\})\cap B^{U}))\\
= & (B^{Y}\cap B^{U})/((B^{Y}\cap B^{U})(y_{ij},\{b_{i}^{Y}\},\{b_{i}^{U}\}).\end{eqnarray*}
We obtain $\mbox{reg}(J_{5}/J_{4})=\mbox{reg}(B^{Y}\cap B^{U}).$ 
\par\end{flushleft}

Since by induction hypothesis we have \[
\mbox{reg}(A^{Y},B^{Y})\cap(A^{U},B^{U})=d=\mbox{d}(A^{Y},B^{Y})\cap(A^{U},B^{U}),\]
 it follows that \[
\mbox{d}(A^{Y}\cap A^{U})=\mbox{d}(B^{Y}\cap A^{U})=\mbox{d}(A^{Y}\cap B^{U})=\mbox{d}(B^{Y}\cap B^{U})=d\]
and \[
\mbox{reg}(A^{Y}\cap A^{U})=\mbox{reg}(B^{Y}\cap A^{U})=\mbox{reg}(A^{Y}\cap B^{U})=\mbox{reg}(B^{Y}\cap B^{U})=d.\]

Also notice that $\mbox{reg}(A^{Y}\cap A^{U})C\geq\mbox{d}(A^{Y}\cap A^{U})C\geq d+1.$
We use the regularity of the quotients of the filtration. to obtain
the regularity of $J_{0}$. We have $\mbox{reg}J_{4}=\mbox{reg}(J_{5})+1=d+1$
and $\mbox{reg}J_{3}=d+1=\mbox{reg}J_{2}=d+1=\mbox{reg}J_{1}=d+1$.
Notice $\mbox{reg}J_{0}\geq\mbox{deg}J_{0}\geq\mbox{deg}J_{1}+1=d+1$.
We will show $\mbox{reg}J_{0}\leq d+1$, hence $\mbox{reg}J_{0}=\mbox{deg}J_{0}=d+1$.
Assume $\mbox{reg}J_{0}>d+2$, then $\mbox{reg}J_{1}=\mbox{max}\{\mbox{reg}J_{0},\mbox{reg}J_{1}/J_{0}=\mbox{reg}(A^{Y}\cap A^{U})C\}>d+2$,
a contradiction. Hence $\mbox{reg}J_{0}=d+1=\mbox{deg}J_{0}$. This
completes the proof of the claim. 

Since $s_{1}\geq s_{2}$, we assume $s_{1}=m$ and we observe that
$(\mbox{in}(\mathcal{L}))^{*}$ does not involve $y_{in}$ for $i>2$
and $z_{mn}$. If $t_{2}<n$, then $(\mbox{in}(\mathcal{L}))^{*}$
does not involve $y_{in}$ for all $i$. By using the claim above,
we can find a filtration starting from $(\mbox{in}(\mathcal{L}))^{*}$
to an ideal $J_{y_{1n}}$, where $y_{1n}$ is not a factor of the
minimal monomial generators of $J_{y_{1n}}$. Then we continue the
filtration to an ideal $J_{2n}$ such that $y_{2n}$ is not a factor
of the minimal monomial generators of $J_{y_{2n}}$. We need those
two steps when $t_{2}=n$, otherwise we skip those steps. The next
step is to look at $z_{m-1,n}$ and find a filtration until an ideal
$J_{z_{,m-1},n}$ such that $z_{m-1,n}$ is not a factor of $J_{z_{m-1,n}}$.
Next we consider $z_{m-2,n}$ and continue to $z_{1n}$. Finally,
we consider $x_{1n}$. Then we will get the ideal $I_{n-1}$. We will
have a filtration as follow:\[
I_{n}\subset J_{y_{1n}}\subset J_{y_{2n}}\subset J_{m-1,n}\subset J_{m-2,n}\subset....\subset J_{x_{1n}}=I_{n-1}.\]

Hence by using a similar argument as in the proof of the claim, we
have 

\begin{flushleft}
\begin{eqnarray*}
\mbox{reg}I_{n} & = & \mbox{reg}I_{n-1}+2+m-1+1\\
 & = & m(n-1)-1+t_{1}-1-(s_{1}-1)+t_{2}-1-(s_{2}-1)+m+2\\
 & = & mn-1+t_{1}-(s_{1}-1)+t_{2}-(s_{2}-1)\\
 & = & \mbox{d}I_{n-1}+2+m-1+1\\
 & = & \mbox{d}I_{n}.\end{eqnarray*}
This complete the proof of this Lemma.
\par\end{flushleft}
\end{proof}
\medskip{}

We are now ready to prove Theorem \ref{CM}.

\medskip{}

\begin{flushleft}
\textit{Proof of Theorem }\ref{CM}: We know that $k[X,Y,Z]/(\mathrm{in}(\mathcal{K})$
is Cohen-Macaulay by Lemma \ref{REGL} and Theorem \ref{ER}. Hence
$\mathcal{R}(\mathbb{D})=k[X,Y,Z]/\mathcal{K}$ is Cohen-Macaulay
{[}E{]}.\hfill{}$\square$
\par\end{flushleft}

\address{\begin{center}
{\small Department of Mathematics, }
\par\end{center}}

\address{\begin{center}
{\small University of California, Riverside, CA 92521, USA}
\par\end{center}}

\email{\begin{center}
e-mail: linkuei@ucr.edu
\par\end{center}}
\end{document}